\date{\today}
\def \al{\alpha}
\def \ga{\gamma}
\def \dl{\delta}
\def \ep{\varepsilon}
\def \om{\omega}
\def \Ga{\Gamma}
\def \Om{\Omega}
\def \operatorname#1{\mathop{\rm #1}}
\def\div{\operatorname{div}}
\def\osc{\operatorname{osc}}
\def\osc2{\operatorname{osc^2}}
\def\dist{\operatorname{dist}}
\def\cd{\partial}
\def\esssup{\operatorname{esssup}}
\def\Q0{Q(x_0,t_0,R)}
\def\0{{x_0,t_0,R}}
\def\build#1_#2{\mathrel{\mathop{\kern 0pt#1}\limits_{#2}}}
\newtheorem{theorem}{Theorem}[section]
\newtheorem{proposition}{Proposition}[section]
\newtheorem{definition}{Definition}[section]
\begin{document}

\title{On the $L_1$--stability  for  parabolic   equations \\  with a  supercritical drift term}
\author{Mikhail Glazkov\footnote{Instituto de Matem{\' a}tica Pura e Aplicada, Brazil}, Timofey  Shilkin\footnote{Leipzig University, Germany, and V.A.~Steklov Mathematical Institute, St.-Petersburg, Russia}}
\date{\today}
\maketitle

 \abstract{In this paper we investigate the  existence, uniqueness and stability of weak solutions  of the initial boundary value problem with the Dirichlet boundary conditions for a parabolic  equation with a drift $b\in L_2$. We prove $L_1$-stability of solutions with respect to perturbations of the drift $b$ in $L_2$} in the case if the  drift    satisfies  the ``non-spectral'' condition $\div b\le 0$.

\section{Introduction and Main Results}

\bigskip

Let $\Om\subset \Bbb R^n$ be a bounded Lipschitz domain with a    boundary $\cd \Om$, $n\ge 2$. Assume $T>0$ and denote  $Q_T:=\Om\times (0,T)$.
We consider a problem
\begin{equation}\label{Equation}
\left\{ \quad \gathered  \cd_t u  -\nu\, \Delta u + b\cdot \nabla u   \ = \  f  \qquad \mbox{in} \quad Q_T, \\
u|_{\cd\Om\times (0,T)} \ = \ 0, \qquad \qquad  \qquad \\
u|_{t=0} \ = \ u_0. \qquad \qquad  \qquad  \endgathered \right.
\end{equation}
Here  $u:Q_T \to \Bbb R^n$ is unknown,  $b: Q_T  \to \Bbb R^n $, $f: Q_T\to \Bbb R$ and  $u_0: \Om\to \Bbb R$ are given functions and $\nu>0$ is a given constant.
For minor technical simplifications throughout the whole paper we will always assume the following condition holds:
\begin{equation}\label{Problem_Data}
b\in L_2(Q_T), \qquad u_0\in L_2(\Om), \qquad
  f\in L_1(Q_T).
\end{equation}
Sometimes we impose also the assumption
\begin{equation}\label{RHS_assumption}
f\in   L_2(0,T; W^{-1}_2(\Om)),
\end{equation}
where $W^{-1}_2(\Om)$ is the dual to the Sobolev space $\overset{\circ}{W}{^1_2}(\Om)$ (see notation  for functional spaces at the end of this section). Note that for $n\ge 3$   the space $L_{\frac{2n}{n+2}}(\Om)$ is imbedded into $W^{-1}_2(\Om)$, so \eqref{RHS_assumption} can be viewed   as a bit stronger assumption than $f\in L_1(Q_T)$.

In this paper we are interested in the existence, uniqueness and stability of weak solutions to the problem \eqref{Equation} belonging to the energy class
  \begin{equation}\label{Energy_class}
  u\in L_\infty(0,T; L_2(\Om))\cap  L_2(0,T; \overset{\circ}{W}{^1_2}(\Om)).
  \end{equation}
 For  $u$ satisfying \eqref{Energy_class} one  can consider    the bilinear form
 \begin{equation}\label{Bilinear_form}
 \mathcal B[u,\eta ] \ := \ \int\limits_{Q_T} b\cdot \nabla u\, \eta\,  dxdt, \qquad \eta \in L_\infty(Q_T).
   \end{equation}
 Note that for $u$ belonging to \eqref{Energy_class} the condition $b\in L_2(Q_T)$ is optimal in the sense that the bilinear form is well-defined. Hence    the following definition makes  sense:

\begin{definition} \label{Def_weak_solution}
  Assume $b$, $f$,  $u_0$ satisfy \eqref{Problem_Data}. We say $u$ is a {\it weak solution} to the problem \eqref{Equation} if  $u$ belongs to the class \eqref{Energy_class}
   and  satisfies the following identity
\begin{equation}
\int\limits_{Q_T} \Big( -u\cd_t\eta + \nu \, \nabla u\cdot \nabla \eta \Big)\, dxdt \ + \ \mathcal B[u,\eta] \ = \ \int\limits_\Om u_0(x)\eta(x,0)\, dx \ + \ \int\limits_{Q_T}  f \eta \, dxdt
\label{Integral_Identity}
\end{equation}
 for any $ \eta\in C_0^\infty(\Om\times [0,T))$.
\end{definition}

 By approximation  we can also extend the class of test functions in \eqref{Integral_Identity} to
$$
\eta\in W^{1,1}_2(Q_T)\cap L_\infty(Q_T), \qquad \eta|_{\cd \Om\times (0,T)} =0, \qquad \eta|_{t=T}=0,
$$
where we denote
$$
W^{1,1}_2(Q_T):=\{ \, \eta\in L_2(Q_T):\, \nabla \eta, \, \cd_t \eta \in L_2(Q_T)\, \}.
$$
Note that if
\begin{equation} \label{Div-free_drift}
\div b=0 \quad \mbox{in} \quad \mathcal D'(Q_T) \qquad \mbox{(i.e. in the sense of distributions)}
\end{equation}
 then the convective term in \eqref{Equation} can be represented in the divergence form $b\cdot \nabla u = \div (ub) $ which in principle allows to introduce weak solutions from a wider class than \eqref{Energy_class}.
In this paper we do not assume that $b$ is divergence free. Instead we impose the condition
\begin{equation}
\div b \, \le \, 0 \quad \mbox{in} \quad \mathcal D'(Q_T).
\label{Non-spectral}
\end{equation}
This means that
$$
\int\limits_{Q_T} b\cdot \nabla \eta\, dxdt \,  \ge \, 0, \quad \forall \, \eta\in C_0^\infty(Q_T): \quad \eta\ge 0 \ \mbox{ in } \ Q_T.
$$
Note that for $b\in L_2(Q_T)$ satisfying \eqref{Non-spectral} the quadratic form \eqref{Bilinear_form} is non-negative:
$$
\mathcal B[u,u] \, \ge \, 0, \qquad \forall\, u \in  L_2(0,T; \overset{\circ}{W}{^1_2}(\Om))\cap L_\infty(Q_T),
$$
which means that the drift $b$ satisfying \eqref{Non-spectral} ``shifts'' the elliptic part of the operator in \eqref{Equation} away from the ``spectral area''. That is why sometimes we call  \eqref{Non-spectral} the {\it non-spectral} condition.

We start with the result on the existence of weak solutions in non-spectral case (which is probably known):

\begin{theorem}\label{Existence_Theorem}
Assume   $b$, $u_0$, $f$   satisfy \eqref{Problem_Data},  \eqref{RHS_assumption}, \eqref{Non-spectral}. Then   there exists at least one  weak solution to the problem \eqref{Equation} which satisfies the energy estimate
\begin{equation} \label{Energy_estimate}
\|  u\|_{L_\infty(0,T; L_2(\Om))}  \ + \   \sqrt{\nu}\, \| \nabla u\|_{L_2(Q_T)}  \, \le \, c\, \Big( \|u_0\|_{L_2(\Om)} + \|f \|_{L_2(0,T; W^{-1}_2(\Om))} \Big)
\end{equation}
 with some constant $c>0$ depending only on $n$, $\Om$ and $T$.
 \end{theorem}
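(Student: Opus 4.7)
My plan is a classical Galerkin approximation in the eigenbasis of the Dirichlet Laplacian on $\Om$, combined with the observation that the non-spectral condition \eqref{Non-spectral} makes the bilinear form $\mathcal B$ non-negative when tested against the Galerkin approximants themselves. This closes the energy estimate despite $b\in L_2(Q_T)$ being supercritical, and since \eqref{Equation} is linear in $u$, weak compactness suffices to pass to the limit; no strong convergence of $\nabla u_N$ is required.

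Let $\{w_k\}_{k\ge 1}$ be the $L_2(\Om)$-orthonormal eigenfunctions of the Dirichlet Laplacian with eigenvalues $\lambda_k$, and seek $u_N(x,t)=\sum_{k=1}^N c_k^N(t)\,w_k(x)$ solving the projected system
$$
(c_k^N)'+\nu\lambda_k c_k^N+\sum_{j=1}^N B_{kj}(t)\,c_j^N=f_k(t),\qquad c_k^N(0)=(u_0,w_k)_{L_2(\Om)},
$$
with $B_{kj}(t):=\int_\Om b(x,t)\cdot\nabla w_j\,w_k\,dx$ and $f_k(t):=\langle f(\cdot,t),w_k\rangle$. Since each $w_k\in L_\infty(\Om)$ by classical elliptic regularity, Cauchy--Schwarz yields $|B_{kj}(t)|\le C_{kj}\|b(\cdot,t)\|_{L_2(\Om)}$, so $B_{kj}\in L_2(0,T)$; likewise $f_k\in L_2(0,T)$ by \eqref{RHS_assumption}. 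Carath\'eodory theory gives a unique absolutely continuous solution on $[0,T]$.

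Multiplying the $k$-th equation by $c_k^N(t)$ and summing produces, at almost every $t$,
$$
\tfrac12\tfrac{d}{dt}\|u_N\|_{L_2(\Om)}^2+\nu\|\nabla u_N\|_{L_2(\Om)}^2+\tfrac12\int_\Om b\cdot\nabla(u_N^2)\,dx \ = \ \langle f,u_N\rangle.
$$
The crux is showing $\int_{Q_T}b\cdot\nabla(u_N^2)\,dxdt\ge 0$. Although $u_N^2$ is not admissible as a test function for the distributional inequality \eqref{Non-spectral}, it lies in $L_\infty(Q_T)\cap L_1(0,T;\WWW{^1_1}(\Om))$ and vanishes on $\cd\Om$ because each $w_k$ does. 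Combining a cutoff near the spatial and temporal boundary with mollification by a non-negative kernel produces $\varphi_\varepsilon\in C_0^\infty(Q_T)$, $\varphi_\varepsilon\ge 0$, with $\nabla\varphi_\varepsilon\to\nabla(u_N^2)$ in $L_2(Q_T)$; since $b\in L_2(Q_T)$, testing \eqref{Non-spectral} against $\varphi_\varepsilon$ and passing to the limit gives the asserted sign. Combined with $|\langle f,u_N\rangle|\le\|f\|_{W^{-1}_2}\|\nabla u_N\|_{L_2}$ and Young's inequality, this yields \eqref{Energy_estimate} uniformly in $N$.

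Extracting a subsequence, $u_N\rightharpoonup u$ weakly in $L_2(0,T;\WWW{^1_2}(\Om))$ and weakly-$*$ in $L_\infty(0,T;L_2(\Om))$. Every term in \eqref{Integral_Identity} passes to the limit by pure linearity; the only one warranting comment is the drift, but for $\eta\in C_0^\infty(\Om\times[0,T))$ one has $b\,\eta\in L_2(Q_T)$, so weak convergence of $\nabla u_N$ gives $\mathcal B[u_N,\eta]\to\mathcal B[u,\eta]$. Test functions not vanishing at $t=0$ recover the initial datum. The main obstacle --- and the only place where hypothesis \eqref{Non-spectral} actually enters --- is the cutoff--mollification justification above: the supercritical assumption $b\in L_2$ leaves no room to manipulate $\mathcal B[u_N,u_N]$ by a naive integration by parts, and one must instead use $u_N^2\ge 0$ as an (approximate) test function against the distributional sign of $\div b$.
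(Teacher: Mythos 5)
Your proof is correct, but it takes a genuinely different route from the paper's. The paper does not use a Galerkin scheme: it regularizes the \emph{drift} rather than projecting the solution. It mollifies $b$ in space-time with a kernel of width $\ep_m<\dist\{\bar\Om_m,\cd\Om\}$, so that the smooth fields $b^m$ satisfy $\div b^m\le0$ pointwise on an exhausting sequence of Lipschitz subdomains $\Om_m\Subset\Om$; it then solves the classical problem on $\Om_m\times(0,T)$, where the drift term in the energy identity is killed by an honest integration by parts ($\int b^m\cdot\nabla u^m\,u^m\,dx=-\tfrac12\int\div b^m\,|u^m|^2\,dx\ge0$), extends by zero, and passes to the weak limit, the convective term converging because $b^m\to b$ strongly in $L_2(Q_T)$ while $\nabla u^m$ converges only weakly. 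You instead keep the rough $b$ fixed and extract the sign of $\mathcal B[u_N,u_N]$ directly from the distributional inequality \eqref{Non-spectral} by approximating $u_N^2$ with nonnegative functions in $C_0^\infty(Q_T)$. That is exactly the assertion $\mathcal B[u,u]\ge0$ for $u\in L_2(0,T;\overset{\circ}{W}{^1_2}(\Om))\cap L_\infty(Q_T)$ which the paper states without proof after \eqref{Non-spectral}, and it is the one step you should write out in full: a mollification of $u_N^2$ extended by zero is not supported inside $\Om$, and a spatial cutoff $\zeta_\ep$ produces the extra gradient term $u_N^2\,\nabla\zeta_\ep$, so you need either Hardy's inequality on the Lipschitz domain or the positive-part trick (approximate $u_N^2$ in $W^1_2$ by $\psi_j\in C_0^\infty(\Om)$, replace $\psi_j$ by $\psi_j^+$, then mollify) to obtain \emph{nonnegative} compactly supported approximants whose spatial gradients converge in $L_2$; the temporal cutoff is indeed harmless since only $\nabla_x$ enters \eqref{Non-spectral}. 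Granting that density lemma, the rest of your argument (Carath\'eodory solvability of the linear ODE system with coefficients in $L_2(0,T)$, the limit passage using $b\,\eta\in L_2(Q_T)$ for fixed bounded $\eta$, recovery of the initial datum) is sound and yields \eqref{Energy_estimate}. What the paper's route buys is that it never tests \eqref{Non-spectral} against non-compactly-supported functions, and it produces smooth approximations $b^m$ preserving \eqref{Non-spectral} on subdomains with control of finer (e.g.\ Morrey-type) norms, which are reused later, for instance in the proof of Theorem \ref{Maximum_Princip_Theorem}; what yours buys is a one-step construction on the original domain that avoids the domain-exhaustion machinery altogether.
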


Our first main result is the following theorem:

\begin{theorem}\label{Main_Theorem}
 Assume $b$, $u_0$, $f$ satisfy \eqref{Problem_Data}, \eqref{Non-spectral}   and $u$ is a weak solution to \eqref{Equation}.
Then
 $$
  u\in C([0,T]; L_1(\Om))
  $$
  and
for any $0\le t_1\le t_2\le T$
\begin{equation}
\| u (\cdot, t_2)\|_{L_1(\Om)} \ \le \ \| u (\cdot, t_1)\|_{L_1(\Om)} \ + \ \, \int\limits_{t_1}^{t_2} \int\limits_\Om |f(x,t)|\, dxdt.
\label{Decay_L_1_norm}
\end{equation}
\end{theorem}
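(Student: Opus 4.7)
The plan is to test the equation against a smooth approximation of $\mathrm{sign}(u)$, then use the non-spectral condition \eqref{Non-spectral} to dispose of the drift term. Choose $\psi_\epsilon \in C^1(\BR)$ odd, non-decreasing, with $|\psi_\epsilon|\le 1$ and $\psi_\epsilon(s)\to \mathrm{sign}(s)$ pointwise, and set $\Psi_\epsilon(s) := \int_0^s \psi_\epsilon(\tau)\,d\tau$, so that $0 \le \Psi_\epsilon(s) \le |s|$ and $\Psi_\epsilon(s)\to |s|$ as $\epsilon\to 0$. Formally, testing \eqref{Integral_Identity} with $\eta = \psi_\epsilon(u)$ on the slab $\Om\times(t_1,t_2)$ and applying the chain rule yields
\begin{multline*}
\int_\Om \Psi_\epsilon(u)\,dx\,\Big|_{t=t_1}^{t_2} + \nu\int_{t_1}^{t_2}\!\int_\Om \psi_\epsilon'(u)|\nabla u|^2\,dxdt \\
+ \int_{t_1}^{t_2}\!\int_\Om b\cdot\nabla\Psi_\epsilon(u)\,dxdt = \int_{t_1}^{t_2}\!\int_\Om f\,\psi_\epsilon(u)\,dxdt.
\end{multline*}
Dropping the non-negative diffusion and drift terms, using $|\psi_\epsilon|\le 1$ on the right, and letting $\epsilon \to 0$ by dominated convergence (with dominator $|u|\in L_1$) yields \eqref{Decay_L_1_norm}.

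Two points must be justified: (a) $\psi_\epsilon(u)$ is not directly admissible as a test function in \eqref{Integral_Identity}, since we do not know $\cd_t u \in L_2(Q_T)$; and (b) the drift term must be shown non-negative. For (a) I would use Steklov averaging: $[u]_h(t) := \frac{1}{h}\int_t^{t+h} u(\cdot,s)\,ds$ lies in $C([0,T-h]; L_2(\Om)) \cap L_2(0,T-h; \overset{\circ}{W}{^1_2}(\Om))$ and, by testing \eqref{Integral_Identity} with separated $\eta(x,s) = \phi(x)\alpha_{t,h}(s)$ where $\alpha_{t,h}$ tends to $\frac{1}{h}\chi_{(t,t+h)}$, satisfies
\[
\cd_t [u]_h - \nu\Delta [u]_h + [b\cdot\nabla u]_h = [f]_h \quad \mbox{in } \mathcal{D}'(\Om), \mbox{ for a.e.\ } t\in(0,T-h).
\]
Since $\psi_\epsilon$ is Lipschitz with $\psi_\epsilon(0)=0$, $\psi_\epsilon([u]_h(\cdot,t)) \in \overset{\circ}{W}{^1_2}(\Om)$ is an admissible spatial test; multiplying the equation by it, integrating over $\Om\times(t_1,t_2)$, and passing $h\to 0$ (using $[b\cdot\nabla u]_h\to b\cdot\nabla u$ and $[f]_h\to f$ in $L_1(Q_T)$, $[u]_h\to u$ in $L_2(0,T;\overset{\circ}{W}{^1_2}(\Om))$, together with the boundedness and Lipschitz regularity of $\psi_\epsilon$) recovers the displayed identity for a.e.\ $t_1<t_2$. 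For (b), the chain rule $b\cdot\nabla u\cdot\psi_\epsilon(u) = b\cdot\nabla\Psi_\epsilon(u)$ combined with the non-negativity $\Psi_\epsilon(u)(\cdot,t)\ge 0$ in $\overset{\circ}{W}{^1_2}(\Om)$ reduces matters to the pointwise-in-$t$ version of \eqref{Non-spectral}, which follows by testing \eqref{Non-spectral} against tensor-product functions $g(x)\alpha(t)$ with $g,\alpha\ge 0$ and a countable dense family of $g$'s, then extending from $C_0^\infty(\Om)$ to $\overset{\circ}{W}{^1_2}(\Om)$ by density using $b(\cdot,t)\in L_2(\Om)$ for a.e.\ $t$.

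To obtain $u \in C([0,T];L_1(\Om))$ and to extend \eqref{Decay_L_1_norm} to every $t_1\le t_2$, I would use an approximation argument. Pick smooth bounded $b^k \to b$ in $L_2(Q_T)$ with $\mathrm{div}\,b^k \le 0$ preserved via non-negative mollifiers (with a careful cutoff near $\cd\Om$), $u_0^k \to u_0$ in $L_2(\Om)$, and $f^k \to f$ in $L_1(Q_T)$ with $f^k \in L_2(0,T;W^{-1}_2(\Om))$ and $\|f^k\|_{L_2(W^{-1}_2)}$ uniformly bounded. Theorem \ref{Existence_Theorem} provides $u^k$, which for smooth bounded data is classical and hence in $C([0,T];L_1(\Om))$. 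The difference $u - u^k$ is a weak solution of the original equation with drift $b$, right-hand side $f - f^k + (b^k - b)\cdot\nabla u^k$, and initial datum $u_0 - u_0^k$; applying the a.e.\ version of \eqref{Decay_L_1_norm} to $u-u^k$ and controlling the extra term by $\|b^k-b\|_{L_2(Q_T)}\|\nabla u^k\|_{L_2(Q_T)} \to 0$ via \eqref{Energy_estimate} shows $u^k \to u$ uniformly in $C([0,T];L_1(\Om))$. I expect the main technical obstacle to be the density argument extending \eqref{Non-spectral} to the test function $\Psi_\epsilon(u)$ (which lies in $L_\infty(Q_T) \cap L_2(0,T;\overset{\circ}{W}{^1_2}(\Om))$ but not of compact support in time), together with preserving $\mathrm{div}\,b^k \le 0$ and the uniform $L_2(W^{-1}_2)$-bound on $f^k$ in the approximation step.
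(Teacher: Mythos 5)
Your core computation is sound and is essentially the paper's own argument in a slightly different guise: you test the Steklov-averaged equation with $\psi_\ep(u_h)$ where $\psi_\ep$ is a smooth odd approximation of $\sign$, so that the time term becomes $\frac{d}{dt}\int_\Om\Psi_\ep(u_h)\,dx$, the diffusion term is non-negative, and the drift term becomes $\int b\cdot\nabla \Psi_\ep(u)$ with $\Psi_\ep(u)\ge 0$, killed by \eqref{Non-spectral}. The paper does the same thing with the one-sided piecewise-linear truncation $\tfrac1\dl\mathcal T_\dl$ in place of $\psi_\ep$ (treating $u_+$ and $u_-$ separately), which costs an extra bookkeeping identity for the time derivative but is otherwise the same mechanism. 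The points you flag as delicate (extending \eqref{Non-spectral} from $C_0^\infty(Q_T)$ to non-negative functions in $L_2(t_1,t_2;\overset{\circ}{W}{^1_2}(\Om))$, and the $h\to 0$ limit of $(b\cdot\nabla u)_h$) are handled in the paper exactly as you propose. One small correction: $\Psi_\ep(u)$ is not in $L_\infty(Q_T)$ (it grows like $|u|$), so the extension of \eqref{Non-spectral} needs an additional truncation at level $N$; the same issue occurs in the paper with $(u-\dl)_+$ and is harmless.

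The genuine gap is in the last third of your plan: the claim $u\in C([0,T];L_1(\Om))$ and the validity of \eqref{Decay_L_1_norm} for \emph{every} $0\le t_1\le t_2\le T$, in particular $t_1=0$. Your Steklov argument only yields the inequality for a.e.\ $t_1<t_2$ (Lebesgue points of $t\mapsto u(\cdot,t)$ in $L_1$), because $u_h(\cdot,t)\to u(\cdot,t)$ in $L_1(\Om)$ is only known at such points; at $t_1=0$ one would need $\tfrac1h\int_0^h u(\cdot,s)\,ds\to u_0$ strongly in $L_1(\Om)$, and Proposition \ref{Weak_continuity_in_time} gives only weak $L_2$ convergence, which yields the lower-semicontinuity inequality in the \emph{wrong} direction. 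Your proposed fix --- approximate by smooth data, apply the decay estimate to $u-u^k$ from $t_1=0$, and conclude uniform convergence of the continuous functions $u^k$ --- is circular precisely at this point: applying the estimate to $u-u^k$ with $t_1=0$ presupposes the strong $L_1$ attainment of the initial datum that you are trying to prove. (A secondary problem with that step: if $f\in L_1(Q_T)\setminus L_2(0,T;W^{-1}_2(\Om))$ you cannot choose $f^k\to f$ in $L_1$ with $\|f^k\|_{L_2(0,T;W^{-1}_2(\Om))}$ uniformly bounded; this can be patched by a diagonal choice of $b^k$ relative to $f^k$, but the circularity at $t=0$ cannot.) The paper closes this gap by a separate ingredient that you are missing: Proposition \ref{Strong_continuity_in_time} (Porretta), which asserts that \emph{any} weak solution with data \eqref{Problem_Data} --- no sign condition on $\div b$ needed --- has $\partial_t u\in L_2(0,T;W^{-1}_2(\Om))+L_1(Q_T)$ and hence lies in $C([0,T];L_1(\Om))$. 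With that established first, $u_h(\cdot,t)\to u(\cdot,t)$ in $L_1(\Om)$ for every $t$, and your a.e.\ inequality upgrades to all $t_1\le t_2$ including the endpoints. You need to either invoke such a result or supply an independent proof of the strong $L_1$ continuity at $t=0$.
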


Note that Theorem \ref{Main_Theorem} says in particular that for drifts $b\in L_2(Q_T)$ satisfying the condition \eqref{Non-spectral} and for the right-hand side  $f\equiv 0$ the $L_1$--norm of weak solutions $u$ to the problem \eqref{Equation} is a non-increasing  function of  time. (Compare it, for example, with the famous Kruzhkov's result \cite{Kruzhkov} for entropy solutions for the scalar conservation laws).

From Theorem \ref{Main_Theorem}  the uniqueness of weak solutions   to the problem \eqref{Equation} in the case of a drift $b\in L_2(Q_T)$  satisfying the non-spectral condition \eqref{Non-spectral} follows immediately:

\begin{theorem}\label{Uniqueness_Theorem}
  Assume    $b \in L_2(Q_T)$  satisfies   \eqref{Non-spectral}.
Then for any  $u_0\in L_2(\Om)$,   $f \in L_1(Q_T)$ the problem \eqref{Equation} has at most one  weak solution in the sense of Definition \ref{Def_weak_solution}.
\end{theorem}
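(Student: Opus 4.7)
The plan is to exploit the linearity of \eqref{Equation} in $u$ (with $b$ held fixed) to reduce uniqueness to the $L_1$-contraction \eqref{Decay_L_1_norm} already established in Theorem \ref{Main_Theorem}. Let $u_1, u_2$ be two weak solutions of \eqref{Equation} corresponding to the same data $(b, u_0, f)$ and set $w := u_1 - u_2$. The energy class \eqref{Energy_class} is a linear subspace, and the integral identity \eqref{Integral_Identity} is linear in the triple $(u, u_0, f)$ for fixed $b$; subtracting the identities for $u_1$ and $u_2$ shows at once that $w$ is itself a weak solution in the sense of Definition \ref{Def_weak_solution}, associated with the same drift $b$, zero initial datum and zero right-hand side.

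Next, I would invoke Theorem \ref{Main_Theorem} applied to $w$. The hypotheses \eqref{Problem_Data} and \eqref{Non-spectral} are trivially verified, since $0\in L_2(\Om)$, $0\in L_1(Q_T)$ and $b$ is unchanged. This yields $w \in C([0,T]; L_1(\Om))$, together with the inequality
$$
\| w(\cdot, t)\|_{L_1(\Om)} \,\le\, \| w(\cdot, 0)\|_{L_1(\Om)}, \qquad 0\le t\le T,
$$
obtained from \eqref{Decay_L_1_norm} by taking $t_1 = 0$, $t_2 = t$ and using $f\equiv 0$. Uniqueness therefore follows as soon as $w(\cdot, 0) = 0$ in $L_1(\Om)$.

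This identification of the initial trace is the only mildly delicate point. The continuity $w \in C([0,T]; L_1(\Om))$ makes the evaluation at $t=0$ unambiguous, and plugging test functions of the form $\eta(x,t) = \chi_\varepsilon(t)\zeta(x)$ with $\zeta \in C_0^\infty(\Om)$ and $\chi_\varepsilon$ a piecewise-linear cut-off equal to $1$ at $t=0$ and supported in $[0,\varepsilon]$ (made admissible by the extension of the test-function class noted after Definition \ref{Def_weak_solution}) into the integral identity for $w$, then sending $\varepsilon \to 0$, forces $\int_\Om w(x,0)\zeta(x)\,dx = 0$ for all such $\zeta$, whence $w(\cdot, 0) = 0$ a.e.\ in $\Om$. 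Combining this with the previous display gives $\|w(\cdot,t)\|_{L_1(\Om)} \equiv 0$ on $[0,T]$, i.e.\ $u_1 = u_2$ a.e.\ in $Q_T$. Once Theorem \ref{Main_Theorem} is available there is essentially no genuine obstacle — all the analytic work has been absorbed into its proof.
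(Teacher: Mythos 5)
Your proposal is correct and follows exactly the route the paper intends: the paper states that Theorem \ref{Uniqueness_Theorem} ``follows immediately'' from Theorem \ref{Main_Theorem} by linearity, i.e.\ the difference of two solutions is a weak solution with zero data and \eqref{Decay_L_1_norm} with $t_1=0$ forces it to vanish. Your extra care in identifying the initial trace via the test-function argument and the continuity $w\in C([0,T];L_1(\Om))$ only fills in details the paper leaves implicit.
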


So, for any $b$, $u_0$, $f$ satisfying the conditions \eqref{Problem_Data}, \eqref{RHS_assumption} and \eqref{Non-spectral} there exists a unique weak solution of the problem \eqref{Equation}. Note that  the violation of the condition \eqref{Non-spectral} can destroy the uniqueness for the problem \eqref{Equation}.
In the elliptic case we have obvious examples of non-uniqueness for the drift  $b(x)=\al \frac x{|x|^2}$ belonging to the critical weak Lebesgue space $  L_{n, w}(\Om)$, $\Om:=\{~x\in \Bbb R^n:\, |x|<1\,\}$:
$$
   -\Delta u + n \, \frac x{|x|^2}\cdot \nabla u =0 , \qquad u(x) = c\, (1-|x|^2),
   $$
   $$
   -\Delta u + (n-2)\, \frac x{|x|^2}\cdot \nabla u =0 , \qquad u(x) =c\, \ln |x|.
   $$
 In the parabolic case we can  also construct an example of instant non-uniqueness starting from zero initial data:

\begin{proposition}
  \label{Non-uniqueness}
Assume $\Om=\{\, x\in \Bbb R^n:\, |x|<1\, \}$,  $u_0\equiv 0$, $f\equiv 0$, $\nu=1$.
 Then for any $T>0$ there exist $b\in L_\infty(0,T; L_{n,w}(\Om))$ and $u\not \equiv 0$ such that $u$ is a weak solution to the problem \eqref{Equation}.

\end{proposition}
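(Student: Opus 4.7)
The plan is to take the time-independent drift $b(x) = n\dfrac{x}{|x|^2}$ (the drift from the first elliptic example above the statement), which lies in $L_{n,w}(\Omega)$ and hence trivially in $L_\infty(0,T;L_{n,w}(\Omega))$. I would exhibit the nontrivial weak solution by exploiting an explicit self-similar solution on all of $\mathbb{R}^n$. A direct calculation (or the self-similar ansatz $u=\psi(|x|^2/t)$, which reduces the PDE to $4\psi''+\psi'=0$ with general solution $\psi(y)=A+Be^{-y/4}$) shows that
\[
U(x,t) := e^{-|x|^2/(4t)}
\]
satisfies $U_t - \Delta U + b\cdot\nabla U = 0$ on $\mathbb{R}^n\times(0,T)$. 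Restricted to $B_1\times(0,T)$, the function $U$ is smooth and bounded, $\nabla U\in L_2(Q_T)$ (a computation in polar coordinates gives $\int_{B_1}|\nabla U|^2\,dx \lesssim t^{n/2-1}$), and $\|U(\cdot,t)\|_{L_2(B_1)}\to 0$ as $t\to 0^+$. The obstruction to $U|_{B_1}$ being itself a weak solution in the sense of Definition~\ref{Def_weak_solution} is that its trace on $\partial B_1$ equals $e^{-1/(4t)}\not\equiv 0$, so $U(\cdot,t)\notin \overset{\circ}{W}{^1_2}(\Omega)$.

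To obtain a weak solution vanishing on $\partial B_1$, let $V$ denote a weak solution---produced by a standard Galerkin scheme after subtracting a smooth cutoff-based lifting of the boundary data---of
\[
V_t - \Delta V + b\cdot\nabla V = 0 \text{ on } B_1\times(0,T), \qquad V|_{\partial B_1} = e^{-1/(4t)}, \qquad V(\cdot,0)=0.
\]
Existence of $V$ in the energy class follows from the standard Galerkin method (the drift $b$ is in $L_2(Q_T)$ for $n\ge 3$, and the lifted problem has zero boundary data and zero initial data). Setting $u := U|_{B_1} - V$ gives a function with $u(\cdot,0)=0$ in $L_2$, $u|_{\partial B_1}=0$, and satisfying the integral identity, so $u$ is a weak solution of the homogeneous problem in the sense of Definition~\ref{Def_weak_solution}.

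The main obstacle, and the real content of the proof, is to show $u\not\equiv 0$, i.e., that $U|_{B_1}\neq V$. Here one must exploit that $b=nx/|x|^2$ fails the non-spectral condition (one computes $\div b = n(n-2)/|x|^2 \ge 0$), so Theorem~\ref{Uniqueness_Theorem} cannot force $U|_{B_1}=V$. The distinction is made quantitative via $L_1$-norms: the ``point-source'' behaviour of $U$ at the origin, where $U(0,t)=1$ for all $t>0$, forces
\[
\|U(\cdot,t)\|_{L_1(B_1)} = \int_{B_1} e^{-|x|^2/(4t)}\,dx
\]
to grow from $0$ to $|B_1|$ over the interval $(0,T)$, whereas the Galerkin solution $V$, driven solely by the small boundary trace $e^{-1/(4t)}$ and having no interior source, can be controlled in $L_1$ via an energy-type estimate yielding $\|V(\cdot,t)\|_{L_1(B_1)} = o(1)$ as $t\to 0^+$ with quantitative rate incompatible with the growth of $\|U\|_{L_1}$ on a short time interval. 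Establishing this quantitative incompatibility (the subtlety being precisely that the usual $L_1$-comparison tools fail in the spectral regime, which is exactly why $L_1$-stability breaks down here) is the hardest step; once it is in hand, $U|_{B_1}\neq V$ and the resulting $u=U|_{B_1}-V$ gives the claimed instant non-uniqueness.
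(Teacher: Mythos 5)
Your starting point coincides with the paper's (the self--similar solution $U(x,t)=e^{-|x|^2/(4t)}$ of $\cd_t U-\Delta U+ n\frac{x}{|x|^2}\cdot\nabla U=0$ on $\Bbb R^n\times(0,T)$, which is exactly the $\al=0$ member of the family $t^{-\al}e^{-|x|^2/(4t)}$ the paper discusses), but from there your route diverges and, as written, it has a genuine gap --- in fact two, and you have correctly located the second one yourself as ``the hardest step'' without closing it. First, the existence of the corrector $V$ is not ``standard Galerkin'': for $b=n\frac{x}{|x|^2}$ one has $\div b=\frac{n(n-2)}{|x|^2}\ge 0$, so the drift term in the energy identity has the wrong sign, and the only available absorption tool, Hardy's inequality $\int_{B_1}\frac{V^2}{|x|^2}\,dx\le \frac{4}{(n-2)^2}\int_{B_1}|\nabla V|^2\,dx$, produces the constant $\frac{2n}{n-2}>1$ in front of $\int|\nabla V|^2$, which cannot be absorbed by the viscosity; the a priori estimate does not close and the Galerkin scheme is not known to converge. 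Second, and more seriously, even granting some $V$, your argument that $u=U|_{B_1}-V\not\equiv 0$ rests on showing that $V$ stays small in $L_1$ because it is ``driven solely by the small boundary trace''; but any such smallness/comparison statement for this drift is precisely a uniqueness-type assertion in the spectral regime $\div b\ge 0$, which is exactly what the proposition is designed to refute. The argument is circular: without a sign condition on $\div b$ there is no tool in the paper (neither the energy estimate of Theorem \ref{Existence_Theorem} nor the $L_1$-decay of Theorem \ref{Main_Theorem}) that controls $V$, so the key inequality $U|_{B_1}\ne V$ is left unproved.

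The paper avoids both difficulties by never introducing a corrector at all: it multiplies $e^{-|x|^2/(4t)}$ by the explicit factor $\zeta_0(|x|,t)=e^{(|x|-1)/t}-1$, which vanishes on $\cd B_1$ (so the Dirichlet condition holds exactly), and then \emph{modifies the drift} so that the product still solves the equation pointwise: the error terms generated by $\zeta_0$ are absorbed into a bounded radial coefficient $b_0(r,t)\in[n-1,n]$, so the new drift $b=b_0(|x|,t)\frac{x}{|x|^2}$ remains in $L_\infty(0,T;L_{n,w}(\Om))$. Since the proposition only asks for the existence of \emph{some} admissible $b$, this freedom to perturb the drift is exactly what makes the construction entirely explicit and removes any need to prove that two solutions differ. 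If you want to salvage your decomposition approach you would have to produce an independent argument forcing $V\ne U|_{B_1}$ (e.g.\ a barrier or comparison argument exploiting $V|_{t=0}=0$ together with some regularity of $V$ near $t=0$ away from the origin), and separately justify the existence of $V$; both are substantive missing steps.
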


Note that the uniqueness of weak solutions to the problem \eqref{Equation} is well-known   if the drift  belongs    to the Ladyzhenskaya-Prodi-Serrin class
 \begin{equation}\label{LPS_class}
 b\in L_l(0,T; L_s(\Om)), \qquad \tfrac ns+\tfrac 2l \, \le \,  1, \qquad l\in [2, +\infty], \qquad s\in [n, +\infty].
 \end{equation}
  Indeed, for $b$ satisfying \eqref{LPS_class} the uniqueness can be easily proved with the help of the Gronwall lemma and interpolation inequalities. So, Theorem \ref{Main_Theorem} shows that the non-spectral condition \eqref{Non-spectral} allows to relax the condition \eqref{LPS_class}.

Note that for a non-spectral drift $b\in L_2(Q_T)$ satisfying \eqref{Non-spectral} a weak solution to the problem \eqref{Equation} is bounded providing the data of the problem are sufficiently regular. Namely, we have the following result:

\begin{theorem}\label{Maximum_Princip_Theorem}
 Assume $b\in L_2(Q_T)$ satisfies \eqref{Non-spectral}, $u_0\in L_\infty(\Om)$, $f\in L_s(Q_T)$ with $s>\frac {n+2}2$.  Let $u$ be a weak solution to \eqref{Equation}.
 Then $u\in L_\infty(Q_T)$ and
\begin{equation}\label{Maximum_Princip_Inequality}
\|  u\|_{L_\infty(Q_T)} \, \le \,   \|u_0\|_{L_\infty(\Om)} \ + \ c\, \| f\|_{L_s(Q_T)}
\end{equation}
with some constant $c>0$ depending only on $n$, $\Om$, $T$, $s$ and $\nu$.
\end{theorem}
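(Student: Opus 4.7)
The plan is a De Giorgi / Stampacchia truncation argument based on test functions of the form $\eta=(u-k)_+$ for levels $k\ge k_0:=\|u_0\|_{L_\infty(\Om)}$. The only non-standard point---replacing the usual divergence-free assumption on $b$---is that the drift contribution in the resulting level-set energy identity has the correct sign under \eqref{Non-spectral}. Indeed, writing $b\cdot\nabla u\,(u-k)_+=\tfrac12\, b\cdot\nabla((u-k)_+^2)$ and observing that $(u-k)_+^2$ lies in $L_2(0,T;\WWW{^1_2}(\Om))$ (after a bounded truncation $\min\{(u-k)_+,M\}$), the condition \eqref{Non-spectral} extends to this non-negative test function by mollification and yields $\mathcal B[u,(u-k)_+]\ge 0$.

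Combining this sign with the standard parabolic identity (obtained via Steklov averaging in $t$ and the cap $M\to\infty$) produces the level-set energy estimate
$$
\sup_{t\in[0,T]}\|(u-k)_+(\cdot,t)\|_{L_2(\Om)}^2\;+\;\nu\,\|\nabla(u-k)_+\|_{L_2(Q_T)}^2\;\le\;2\int_{Q_T}|f|\,(u-k)_+\,dxdt,
$$
the initial term dropping out since $u_0\le k$. Ladyzhenskaya's parabolic embedding bounds the left-hand side from below by $c\,\|(u-k)_+\|_{L_q(Q_T)}^2$ with $q=2(n+2)/n$, and H\"older's inequality gives
$$
\int_{Q_T}|f|\,(u-k)_+\,dxdt\;\le\;\|f\|_{L_s(Q_T)}\,\|(u-k)_+\|_{L_q(Q_T)}\,|A_k|^{\,1-\frac1s-\frac1q},\quad A_k:=\{(x,t)\in Q_T:u>k\}.
$$
Since $s>(n+2)/2$ the exponent $\gamma:=1-\tfrac1s-\tfrac1q$ is strictly positive, so $\|(u-k)_+\|_{L_q(Q_T)}\le C\,|A_k|^\gamma$; Stampacchia's classical iteration lemma then delivers an $M\le c\,\|f\|_{L_s(Q_T)}$ with $(u-k_0-M)_+\equiv0$. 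Applying the same argument to $-u$---which satisfies \eqref{Equation} with the same drift $b$ (still non-spectral), source $-f$ and initial datum $-u_0$---gives the symmetric lower bound and hence \eqref{Maximum_Princip_Inequality}.

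The main technical difficulty lies in the rigorous justification of the two non-standard test-function manipulations above: using $(u-k)_+$ in \eqref{Integral_Identity} (which requires $\cd_t\eta\in L_2(Q_T)$, not available a priori) and testing \eqref{Non-spectral} against $(u-k)_+^2$. Both are handled by a routine two-step regularization: Steklov average $u$ in time, cap it at level $M$, simultaneously mollify $b$ in space-time to transfer \eqref{Non-spectral} to smooth divergences, pass to the $L_2$-limit in $b$, and finally remove the time-average and the cap using monotone convergence and the energy bound. Once these technicalities are in place the proof reduces to the textbook Stampacchia machinery.
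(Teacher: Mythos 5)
Your argument is correct in outline, but it takes a genuinely different route from the paper. You run the De Giorgi/Stampacchia truncation directly on the given weak solution $u$, justifying the test function $\min\{(u-k)_+,M\}$ by Steklov averaging and obtaining the sign of the drift term from \eqref{Non-spectral} applied to the truncated squares. The paper instead never tests the rough problem with $(u-k)_+$: it performs the level-set estimate only for the smooth approximations $u^m$ of \eqref{Equation_m} (smooth $b^m$ with $\div b^m\le 0$ on $\Om_m$), where the manipulation is classical, derives a uniform bound via Proposition \ref{Level_sets_Theorem}, extracts a weak-$*$ limit $v\in L_\infty(Q_T)$, and identifies $v$ with $u$ by the uniqueness Theorem \ref{Uniqueness_Theorem}. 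The trade-off: the paper's route delegates all test-function technicalities to the smooth setting but is not self-contained --- it crucially relies on uniqueness, hence on Theorem \ref{Main_Theorem}; your route is self-contained and is in effect the proof of Theorem \ref{Main_Theorem} rerun at level $k$ without dividing by the truncation height, using exactly the machinery of Propositions \ref{Integral_identity_pointwise_h}--\ref{Level_sets_Theorem}. Both work.

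Two points in your sketch need tightening. First, after capping, the identity $b\cdot\nabla u\,(u-k)_+=\tfrac12\,b\cdot\nabla\big((u-k)_+^2\big)$ holds only on $\{k<u<k+M\}$; on $\{u\ge k+M\}$ the capped test function is constant and you pick up the extra term $M\int b\cdot\nabla(u-k-M)_+\,dxdt$, which is also nonnegative by \eqref{Non-spectral} --- this is precisely the splitting \eqref{7} in the paper, and the time-derivative term likewise produces the extra monotone quantity of \eqref{Main_Lemma}; neither harms the argument, but both must appear. Second, your stated condition ``$\gamma:=1-\tfrac1s-\tfrac1q>0$'' is not the one that makes the iteration close: Stampacchia's lemma applied to $|A_h|\le C(h-k)^{-q}|A_k|^{q\gamma}$ requires $q\gamma>1$, which after substituting $q=\tfrac{2(n+2)}{n}$ is exactly $\tfrac{2}{n+2}-\tfrac1s>0$, i.e.\ $s>\tfrac{n+2}{2}$ --- the paper's exponent $\ga=\tfrac2{n+2}-\tfrac1s$. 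You attribute the success to the right hypothesis, but the displayed exponent is the wrong threshold.
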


We emphasize that our Theorem \ref{Maximum_Princip_Theorem} is concerned only with solutions to  the initial boundary value problem  (with sufficiently ``regular'' initial and boundary data)  and its analogue in the local setting for the drift $b\in L_2(Q_T)$  fails even in the case of the divergence-free case \eqref{Div-free_drift}, see  \cite{Albritton}, see also the further discussion of the influence of  the regularity of boundary data on the boundedness of weak solutions inside the domain in \cite{Filonov_Shilkin}.  Note also that for a sufficiently smooth drift  satisfying the condition \eqref{LPS_class}  the estimate \eqref{Maximum_Princip_Inequality} holds even if the condition \eqref{Non-spectral} is violated,   but in this case the constant $c=c_b>0$ in \eqref{Maximum_Princip_Inequality} depends on the norm \eqref{LPS_class} of $b$.

Our second  main result   is the following theorem on the stability of weak solutions:

\begin{theorem}\label{Stability_Theorem} Assume functions $b$, $u_0$, $f$ and $b^m$, $u_0^m$, $f^m$ satisfy \eqref{Problem_Data} and \eqref{Non-spectral} and $f$ satisfies \eqref{RHS_assumption}. Let $u$  and $u^m$ be the unique weak solutions to the problem \eqref{Equation} corresponding to   $b$, $u_0$, $f$ and $b^m$, $u_0^m$, $f^m$  respectively.
 Then
 \begin{equation}\label{Stability_estimate}
\gathered \| u^m - u\|_{L_\infty(0,T; L_1(\Om))}  \ \le   \ \| u^m_0 - u_0\|_{ L_1(\Om)}  +   \, \| f^m - f\|_{L_1(Q_T)}   +  \\  + \,  \frac {c}{\sqrt{\nu}} \,  \| b^m - b\|_{L_2(Q_T)} \, \Big( \|u_0\|_{L_2(\Om)} + \|f \|_{L_2(0,T; W^{-1}_2(\Om))}\Big).
\endgathered
\end{equation}
 with some constant $c>0$ depending only on $n$, $\Om$ and $T$.
In particular, if
$$
b^m\to b \quad \mbox{in} \quad L_2(Q_T), \qquad u_0^m\to  u_0\quad  \mbox{in} \quad L_1(\Om), \qquad  f^m \to f \quad  \mbox{in} \quad L_1(Q_T)
$$
then
$$
u^m \to u \quad \mbox{in} \quad L_\infty(0,T; L_1(\Om)).
$$
\end{theorem}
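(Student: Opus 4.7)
The plan is to reduce the stability estimate to the contractivity statement of Theorem \ref{Main_Theorem} applied to the difference $w:=u^m-u$. Subtracting the integral identities from Definition \ref{Def_weak_solution} for $u^m$ (with drift $b^m$) and $u$ (with drift $b$), and writing the discrepancy of the drift terms as
$$
b^m\cdot\nabla u^m - b\cdot\nabla u \ = \ b^m\cdot\nabla w \ + \ (b^m-b)\cdot\nabla u,
$$
I get that for every admissible test function $\eta$,
$$
\int\limits_{Q_T}\bigl(-w\,\cd_t\eta+\nu\,\nabla w\cdot\nabla\eta\bigr)\,dxdt+\mathcal B^m[w,\eta] \ = \ \int\limits_\Om (u_0^m-u_0)\eta(x,0)\,dx+\int\limits_{Q_T} F\,\eta\,dxdt,
$$
where $\mathcal B^m$ is the bilinear form \eqref{Bilinear_form} associated with $b^m$, and
$$
F \ := \ (f^m-f)\,-\,(b^m-b)\cdot\nabla u.
$$

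So $w$ is a weak solution in the sense of Definition \ref{Def_weak_solution} to the problem \eqref{Equation} with drift $b^m$, initial datum $u_0^m-u_0$ and right-hand side $F$. Since $b^m$ still satisfies the non-spectral condition \eqref{Non-spectral} and, by Cauchy--Schwarz,
$$
\| F\|_{L_1(Q_T)} \ \le \ \| f^m-f\|_{L_1(Q_T)} \ + \ \| b^m-b\|_{L_2(Q_T)}\,\|\nabla u\|_{L_2(Q_T)},
$$
in particular $F\in L_1(Q_T)$, Theorem \ref{Main_Theorem} applies to $w$ and yields first the continuity $w\in C([0,T];L_1(\Om))$ and then, for every $t\in[0,T]$,
$$
\| w(\cdot,t)\|_{L_1(\Om)} \ \le \ \| u_0^m-u_0\|_{L_1(\Om)} \ + \ \| f^m-f\|_{L_1(Q_T)} \ + \ \| b^m-b\|_{L_2(Q_T)}\,\|\nabla u\|_{L_2(Q_T)}.
$$

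It remains to control $\|\nabla u\|_{L_2(Q_T)}$. Here I invoke the fact that $u$ is \emph{the} (unique) weak solution associated with data satisfying \eqref{Problem_Data} and \eqref{RHS_assumption}; the existence construction of Theorem \ref{Existence_Theorem} produces a weak solution obeying the energy estimate \eqref{Energy_estimate}, and by Theorem \ref{Uniqueness_Theorem} it coincides with $u$. Therefore
$$
\sqrt{\nu}\,\|\nabla u\|_{L_2(Q_T)} \ \le \ c\,\bigl(\|u_0\|_{L_2(\Om)}+\|f\|_{L_2(0,T;W^{-1}_2(\Om))}\bigr),
$$
and substituting this bound into the previous inequality and taking the supremum in $t$ yields \eqref{Stability_estimate}. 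The convergence conclusion is then immediate from \eqref{Stability_estimate}.

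The only delicate point to double-check is step one: that one is genuinely allowed to regard $w$ as a weak solution of the perturbed problem with drift $b^m$ and source $F$, and in particular that the term $(b^m-b)\cdot\nabla u$ is legitimately placed on the right-hand side. This is a matter of bookkeeping since $(b^m-b)\cdot\nabla u\in L_1(Q_T)$ by the energy class of $u$, and the bilinear form $\mathcal B^m[w,\eta]$ is well-defined because $w\in L_2(0,T;\overset{\circ}{W}{^1_2}(\Om))$ and $b^m\in L_2(Q_T)$. No additional regularity of $u^m$ beyond the energy class is used, which is why it is harmless that $f^m$ is not assumed to satisfy \eqref{RHS_assumption}.
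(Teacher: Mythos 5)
Your proposal is correct and follows essentially the same route as the paper: write $w=u^m-u$ as a weak solution of the problem with drift $b^m$, initial datum $u_0^m-u_0$ and source $(f^m-f)-(b^m-b)\cdot\nabla u\in L_1(Q_T)$, apply Theorem \ref{Main_Theorem} to $w$, and control $\|\nabla u\|_{L_2(Q_T)}$ via the energy estimate \eqref{Energy_estimate} combined with uniqueness. Your explicit justification that the estimate \eqref{Energy_estimate} transfers to \emph{the} weak solution $u$ through Theorem \ref{Uniqueness_Theorem} is a detail the paper leaves implicit, but the argument is the same.
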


From our point of view the $L_1$-stability of solutions under perturbations of a drift $b$ in $L_2$  in Theorem \ref{Stability_Theorem} is most interesting, as  for a fixed $b\in L_2(Q_T)$  satisfying \eqref{Non-spectral} the $L_2$-stability of weak solutions to \eqref{Equation} with respect to the right hand side $f$ and the initial data $u_0$ follows directly  from the energy estimate \eqref{Energy_estimate} and $L_\infty$-stability follows from \eqref{Maximum_Princip_Inequality}.
On the other hand, for weak solutions in the energy class  \eqref{Energy_class}, thanks to the obvious identity
$$
b^m\cdot \nabla u^m -  b \cdot \nabla u = b^m \cdot \nabla (u^m-u) + (b^m-b)\cdot \nabla u,
$$
the stability of the solutions to the problem \eqref{Equation} under the perturbation of the drift $b\in L_2(Q_T)$ can be viewed as a problem of stability of solutions to the same  equations with smooth drifts $b^m$ and the right hand sides $(b^m-b)\cdot \nabla u$  belonging to $L_1(Q_T)$. Hence  our estimate \eqref{Decay_L_1_norm} together with the energy estimate \eqref{Energy_estimate} provide \eqref{Stability_estimate}.

Note that  the violation of the condition \eqref{Non-spectral} can destroy the stability  for the problem \eqref{Equation}.  Namely, we have the following counterexample:

\begin{proposition}
  \label{Non-stability} Assume $\Om=\{\, x\in \Bbb R^n:\, |x|<1\, \}$ and
  $$
  b(x) = (n-2)\, \frac{x}{|x|^2}, \qquad b^\ep(x) = b(x)  -  \ep \, \ln|x| \, x ,  \qquad u_0(x) = \ln |x|.
  $$
  Then for any $\ep\ge 0$   the function  $$u^\ep(x,t) \, = \,  e^{\ep t} \,  u_0(x) $$  is a weak solution  to the  problem \eqref{Equation} in $\Om\times (0, +\infty)$ corresponding to the initial data $u_0$, the drift $b^\ep$, $\nu = 1$   and the right hand side $f\equiv 0$. Moreover,  for any $\ep \in (0,1)$ there exists $t_\ep >0$  such that
\begin{equation}
   \|b^\ep -b \|_{L_2( Q_{t_\ep} )} \, < \, \ep , \qquad  \|u^\ep(\cdot, t_\ep) - u_0  \|_{L_1(\Om)} \ \ge  \ \frac{1}{\ep}.
 \label{Norm_growth}
 \end{equation}
 Here $Q_{t_\ep}:= \Om\times (0,t_\ep)$.
\end{proposition}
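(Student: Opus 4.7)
The plan is a direct verification. I exhibit $u^\ep(x,t)=e^{\ep t}\ln|x|$ and $b^\ep=b-\ep(\ln|x|)x$ as in the statement, check that $u^\ep$ lies in the energy class \eqref{Energy_class} and $b^\ep\in L_2(Q_T)$, verify the PDE pointwise away from the origin, promote the pointwise identity to the integral identity \eqref{Integral_Identity} via a cutoff argument, and finally compute the two norms appearing in \eqref{Norm_growth}. For $n\ge 3$ one has $\ln|x|\in L_2(B_1)$, $|\nabla\ln|x||^2=|x|^{-2}\in L_1(B_1)$, and $\ln|x|=0$ on $\cd B_1$, so $u^\ep\in L_\infty(0,T;L_2(\Om))\cap L_2(0,T;\WWW^1_2(\Om))$; similarly $b^\ep\in L_2(Q_T)$ because $|b|^2=(n-2)^2|x|^{-2}\in L_1(B_1)$ and the perturbation $\ep(\ln|x|)x$ is uniformly bounded on $B_1$ (since $r\mapsto r\ln r$ is bounded on $[0,1]$). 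Using $\Delta\ln|x|=(n-2)/|x|^2$ and $(\ln|x|)x\cdot(x/|x|^2)=\ln|x|$, a direct computation gives
\[
\cd_t u^\ep-\Delta u^\ep+b^\ep\cdot\nabla u^\ep=\ep e^{\ep t}\ln|x|-e^{\ep t}\tfrac{n-2}{|x|^2}+e^{\ep t}\Big(\tfrac{n-2}{|x|^2}-\ep\ln|x|\Big)=0
\]
pointwise on $(B_1\setminus\{0\})\times(0,T)$.

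To upgrade this to the integral identity of Definition \ref{Def_weak_solution}, I introduce a smooth radial cutoff $\chi_\delta$ with $\chi_\delta=0$ on $B_\delta$, $\chi_\delta=1$ outside $B_{2\delta}$, $|\nabla\chi_\delta|\le C\delta^{-1}$. For a test function $\eta\in C_0^\infty(\Om\times[0,T))$ I multiply the pointwise PDE by $\eta\chi_\delta$ and integrate by parts in $t$ and $x$ over $Q_T$. Every resulting term matches the corresponding piece of \eqref{Integral_Identity} except the commutator $\int_{Q_T}\nabla u^\ep\cdot\nabla\chi_\delta\,\eta\,dxdt$, whose integrand is bounded by $C\delta^{-2}\|\eta\|_{L_\infty}$ on the shell $\{\delta\le|x|\le 2\delta\}$ of volume $\lesssim\delta^n$, hence is $O(\delta^{n-2})\to 0$ as $\delta\to 0$ for $n\ge 3$. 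All other integrands are dominated by fixed $L_1(Q_T)$ functions (using $b^\ep,\nabla u^\ep\in L_2(Q_T)$ and Cauchy--Schwarz), so dominated convergence yields \eqref{Integral_Identity}.

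For the quantitative bounds \eqref{Norm_growth}, explicit integration in polar coordinates produces
\[
\|b^\ep-b\|_{L_2(Q_{t_\ep})}^2=\ep^2 t_\ep\,C_1,\qquad \|u^\ep(\cdot,t_\ep)-u_0\|_{L_1(\Om)}=(e^{\ep t_\ep}-1)\,C_2,
\]
with $C_1:=\int_{B_1}(\ln|x|)^2|x|^2\,dx$ and $C_2:=\int_{B_1}|\ln|x||\,dx$ positive constants depending only on $n$. Choosing $t_\ep>0$ so that simultaneously $C_1 t_\ep<1$ and $C_2(e^{\ep t_\ep}-1)\ge 1/\ep$ then yields \eqref{Norm_growth}. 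The principal technical obstacle is the cutoff argument of the second step: one must check that the mild singularities of $u^\ep$, $\nabla u^\ep$, and $b^\ep$ at the origin are integrated against $\nabla\chi_\delta$ with enough room that the commutator vanishes in the limit; once this is in hand, everything else reduces to the explicit one-dimensional integrals above and a bookkeeping choice of $t_\ep$.
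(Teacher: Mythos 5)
Your verification that $u^\ep$ is a weak solution (energy--class membership for $n\ge 3$, the pointwise identity away from the origin, and the cutoff argument with $O(\delta^{n-2})$ error) is correct and in fact more thorough than the paper, which omits this part entirely. The genuine gap is in the final step, which you dismiss as bookkeeping: the two constraints you impose on $t_\ep$ are incompatible once $\ep$ is small. With your (correct) formulas $\|b^\ep-b\|_{L_2(Q_{t_\ep})}=\ep\sqrt{C_1 t_\ep}$ and $\|u^\ep(\cdot,t_\ep)-u_0\|_{L_1(\Om)}=C_2\,(e^{\ep t_\ep}-1)$, the first bound in \eqref{Norm_growth} forces $t_\ep<1/C_1$, a cap independent of $\ep$; hence $C_2(e^{\ep t_\ep}-1)\le C_2\bigl(e^{1/C_1}-1\bigr)=:M$ for every $\ep\in(0,1)$, and as soon as $\ep<1/M$ the second requirement $C_2(e^{\ep t_\ep}-1)\ge 1/\ep$ cannot be satisfied by any $t_\ep$. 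So the asserted simultaneous choice of $t_\ep$ does not exist for small $\ep$, and this step cannot be repaired under the literal reading you adopt.

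The paper circumvents exactly this obstruction by decoupling the size of the drift perturbation from $\ep$: it introduces a second parameter $\dl\to+0$, works with the pair $(b^\dl,u^\dl)$, and takes $t_\dl=\dl^{-a}$ with $a\in(1,2)$, so that $\|b^\dl-b\|_{L_2(Q_{t_\dl})}\sim\dl^{\,1-a/2}\to 0$ while $e^{\dl t_\dl}=e^{\dl^{\,1-a}}\to+\infty$; given $\ep$ one then picks $\dl=\dl(\ep)\ll\ep$ so that both inequalities of \eqref{Norm_growth} hold for $b^{\dl}$, $u^{\dl}$ at time $t_{\dl}$. In other words, the perturbation witnessing instability at level $\ep$ is $b^{\dl(\ep)}$, not $b^\ep$ itself; the identical superscript $\ep$ in the displayed inequalities of the Proposition is effectively a misprint, as your own computation demonstrates. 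To complete your argument you should either adopt this decoupling (keeping all of your preparatory work, which transfers verbatim) or explicitly note that the statement as literally written fails for small $\ep$.
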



\bigskip
The study of the weak solvability and properties of weak solutions to drift-diffusion equations has a long history. Very often the problem \eqref{Equation} is considered under the additional restriction of the divergence free drift \eqref{Div-free_drift}
see, for example, \cite{Albritton},  \cite{Lis_Zhang}, \cite{Sem}, \cite{SSSZ}, \cite{Vicol}, \cite{SVZ}, \cite{Zhang} and references there.  In this paper we  demonstrate  that in the case of a supercritical non-divergence free drift $b$    the  non-spectral condition \eqref{Non-spectral}  plays the crucial role for existence, uniqueness  and stability of weak solutions to the problem \eqref{Equation}.

Sometimes the study of the problem \eqref{Equation} is coupled with the study of the dual problem
\begin{equation}\label{Equation_dual}
\left\{ \quad \gathered    -\cd_t w  - \Delta w - \div (b w)  \, = \, g \qquad\mbox{in}\quad  Q_T, \\
w|_{\cd \Om \times (0,T)} \ = \ 0, \qquad \quad \\
w|_{t=T} \ = \ w_0, \qquad  \quad
\endgathered\right.
\end{equation}
where $g:Q_T\to \Bbb R$ and $w_0:\Om\to \Bbb R$ are given functions.
The problem \eqref{Equation_dual} is presented in the divergent form and hence  its  weak solutions, depending on the regularity of $b$,  in principle  can be defined in a wider class than the energy space \eqref{Energy_class}.
For example, in  \cite{Boccardo}  (see also references there) distributional solutions to the problem \eqref{Equation_dual}  are studied.
The $L_1$-stability   for the problem \eqref{Equation_dual} in the one-dimensional case ($n=1$)  was studied in \cite{Sanders}.
Note that thought in the divergence free case  \eqref{Div-free_drift} the problems \eqref{Equation} and \eqref{Equation_dual} are equivalent,  it is not so in  the general case.

The existence of weak solutions in the energy class \eqref{Energy_class} under the assumptions \eqref{Problem_Data},  \eqref{RHS_assumption}, \eqref{Non-spectral} follows directly from the energy estimate in Theorem \ref{Existence_Theorem}. The uniqueness of weak solution under the same assumptions is more delicate. In \cite{Zhang} it was proved for the divergence free case \eqref{Div-free_drift} with the help of duality method  which provides the following estimate of weak solutions to the problem \eqref{Equation}:
\begin{equation}\label{Esimate_via_duality}
\| u \|_{L_1(Q_T)} \, \le \, c\, \Big( \|u_0\|_{L_1(\Om)} + \| f\|_{L_1(Q_T)} \Big)
\end{equation}
with some constant $c>0$ depending only on $n$, $\Om$, $T$ and $\nu$. Here we only briefly sketch the idea assuming all functions to be sufficiently smooth, see   details in \cite{Zhang}, see also the  explanation  of the idea  in the elliptic case in   \cite{Filonov_Shilkin}. Using duality we can represent the $L_1$-norm of $u$ as
$$
\| u\|_{L_1(Q_T)} \ = \ \sup\limits_{g\in L_\infty(Q_T), \, \|g\|_{L_\infty(Q_T)\le 1}} \Big| \int\limits_{Q_T} ug\, dxdt\, \Big|.
$$
Given $g\in L_\infty(Q_T)$ with $\|g\|_{L_\infty(Q_T)}\le 1$, denote by $w_g$ the solution to the problem  \eqref{Equation_dual} with $w_0\equiv 0$.
For the problem \eqref{Equation_dual} similar to \eqref{Maximum_Princip_Inequality} we have
\begin{equation}\label{Maximum_Princip_Inequality_dual}
\| w_g\|_{L_\infty(Q_T)} \, \le \, c\, \|g\|_{L_\infty(Q_T)} \,  \le \, c
\end{equation}
with some constant $c$ depending only on $n$, $\Om$, $T$ and $\nu$. We emphasize   that both estimates \eqref{Maximum_Princip_Inequality} and \eqref{Maximum_Princip_Inequality_dual} with constants independent on $b$ relay crucially on the assumption \eqref{Non-spectral}.
Using integration by parts we obtain
$$
\gathered
\int\limits_{Q_T} ug \, dxdt \ = \ \int\limits_{Q_T} u\Big(  -\cd_t w_g  - \nu\, \Delta w_g - \div (b w_g) \Big)\, dxdt \ = \\ =  \ \int\limits_{Q_T}\Big(  -u\cd_t w_g  +\nu \nabla u\cdot \nabla w_g + b \cdot \nabla u \,  w_g  \Big)\, dxdt \  = \ \int\limits_{\Om } u_0(x)w_g(x,0)\, dx \ + \ \int\limits_{Q_T} fw_g\, dxdt.
\endgathered
$$
Hence we obtain
$$
\gathered
 \Big| \int\limits_{Q_T} ug\, dxdt\, \Big| \ \le \ \Big( \|u_0\|_{L_1(\Om)} + \| f\|_{L_1(Q_T)} \Big) \, \|w_g\|_{L_\infty(Q_T)} \ \le \\ \le \ c\, \Big( \|u_0\|_{L_1(\Om)} + \| f\|_{L_1(Q_T)} \Big)\, \|g\|_{L_\infty(Q_T)} \ \le \  c\, \Big( \|u_0\|_{L_1(\Om)} + \| f\|_{L_1(Q_T)} \Big).
 \endgathered
$$
So, we arrive at \eqref{Esimate_via_duality}
(from the proof it is clear that the norm $\| u\|_{L_1(Q_T)}$ in the left hand side of \eqref{Esimate_via_duality} can be replaced by the norm $\| u\|_{L_s(Q_T)}$ with some sufficiently small $s>1$).

Of cause, the duality method can be used  also  for the proof of the uniqueness of weak solutions in the non-spectral case \eqref{Non-spectral}. Our Theorem \ref{Main_Theorem} gives an alternative proof of the uniqueness. Note also, that comparing with the estimate \eqref{Esimate_via_duality}, our estimate \eqref{Stability_estimate} has the advantage that it is uniform in time. It seems that the duality method does not allow  to obtain such a result.

It can be interesting to compare the uniqueness result for the parabolic equation \eqref{Equation} with the uniqueness theorem for the linear transport equation (of cause, in the case $\nu= 0$ the Dirichlet condition in \eqref{Energy_class} must be replaces by some appropriate boundary condition, for example, non-penetration condition on the boundary). It is well known that the optimal (in a sense) conditions on $b$ which provide the uniqueness of bounded weak solutions for the transport equation are the DiPerna-Lions \cite{DiPerna_Lions}  or Ambrosio  \cite{Ambrosio} conditions. So, in the non-spectral case \eqref{Non-spectral} the inclusion of the viscosity in the equation leads to a significant relaxation in the sense  of the regularity of the drift without destroy of the uniqueness.

For the further properties of weak solution to the parabolic equations with the non-divergence free drift  we refer to  \cite{Krylov_4},  \cite{NU}, \cite{Bian} and references there.
The elliptic case is studied much better, see, for example \cite{Kim_Kim}, \cite{Tsai},  \cite{Krylov_2}, \cite{Krylov_3},  \cite{Kwon_1}, \cite{Kwon_2}, \cite{Lee} and references there. Note that such properties as the H\" older continuity and  higher integrability  of weak solutions require  generally speaking  more regularity   than only $b\in L_2(Q_T)$.  Namely, local boundedness of weak solutions for parabolic problems  is discussed in \cite{Albritton}, \cite{Filonov_Hodunov}, \cite{NU}.  H{\" o}lder continuity of weak  solutions to the problem \eqref{Equation}  with a drift $b$ belonging  to  a certain scale-invariant space was  studied, for example,  in    \cite{SSSZ}, \cite{Vicol}, \cite{SVZ} in  the divergence free case \eqref{Div-free_drift},   in \cite{ChSh-1}, \cite{NU}  in the non-spectral case \eqref{Non-spectral} and in \cite{ChSh-2}, \cite{ChSh-3},  \cite{NU} for certain  drifts in the spectral case $\div b\ge 0$. On the other hand, for drifts satisfying \eqref{Div-free_drift} it was shown  in  \cite{Filonov} in the elliptic case and in \cite{Bian} in the parabolic case  that the  continuity of bounded weak solutions to the problem \eqref{Equation} can fail if the drift is  supercritical.

Our paper is organized as follows. In Section \ref{Auxiliary_results} we introduce some auxiliary results which are basic tools of our paper. In Section
\ref{Existence} we   prove  Theorems \ref{Existence_Theorem} and \ref{Maximum_Princip_Theorem}.
In Section \ref{Stability} we present the proofs of Theorems \ref{Main_Theorem} and \ref{Stability_Theorem}. Finally, in Section \ref{Section_Counterexamples} we comment on Propositions \ref{Non-uniqueness} and \ref{Non-stability}.

\medskip

In the paper we use the following notation. For any $a$, $b\in
\mathbb  R^n$ we denote by $a\cdot b = a_k b_k$ their  scalar product in
$\mathbb R^n$. Repeated indexes assume the summation from 1 to $n$.   For $f$ depending on $x$ and $t$ we denote by $\nabla f$ the gradient of $f$ with respect to $x$ and $\cd_t f:=\frac{\cd f}{\cd t}$. We denote by $L_p(\Omega)$ and $W^k_p(\Omega)$ the
usual Lebesgue and Sobolev spaces. We do not distinguish between functional spaces of scalar and vector functions and omit the target space in notation. $C_0^\infty(\Om)$ is the space of
smooth functions compactly supported in $\Om$. The space
$\overset{\circ}{W}{^1_p}(\Omega)$ is the closure of
$C_0^\infty(\Omega)$ in $W^1_p(\Omega)$ norm. We denote by $\mathcal D'(\Om)$   the spaces of distributions on $\Om \subset \Bbb R^n$. For $p>1$ we denote $p'=\frac p{p-1}$.  The space $W^{-1}_p(\Om)$ is dual to $\overset{\circ}{W}{^1_{p'}}(\Om)$. The space $\operatorname{Lip}(\Om)$  consists of functions which are Lipschitz continuous of $\Om$.
 The symbols $\rightharpoonup$, $\overset{*}{\rightharpoonup}$ and $\to $ stand for the
weak, $*$-weak and strong convergence respectively. We denote by $B_R(x_0)$
the ball in $\mathbb R^n$ of radius $R$ centered at $x_0$ and write
$B_R$ if $x_0=0$. We write   $B$ instead of $B_1$.  For an open set $\Om_0\subset \Bbb R^n$ we write $\Om\Subset \Om_0$ if $\bar \Om$ is compact and $\bar \Om \subset \Om_0$.
For $p\in [1, +\infty)$ we denote by
$L_{p,w}(\Om)$ the weak Lebesgue space equipped with the norm
$$
\| b\|_{L_{p,w}(\Om)} \ := \ \sup\limits_{s>0}
s~|\{~x \in \Om: ~|b(x )|>s~\}|^{\frac 1p}.
$$
For $Q_T:=\Om\times (0,T)$ and $p\in [1, +\infty]$ we use the following notation for anisotropic Sobolev spaces:
$$
\gathered
W^{1,0}_p(Q_T) : = \{  u\in L_p(Q_T): \, \nabla u \in L_p(Q_T) \},  \  \| u\|_{W^{1,0}_p(Q_T)} = \| u\|_{L_p(Q_T)} +\|\nabla u\|_{L_p(Q_T)},   \\ W^{1,1}_p(Q_T) : = \{  u\in W^{1,0}_p(Q_T): \,  \cd_t u   \in L_p(Q_T) \}, \  \| u\|_{W^{1,1}_p(Q_T)} = \| u\|_{W^{1,0}_p(Q_T)} +\|\cd_t u\|_{L_p(Q_T)}, \\
W^{2,1}_p(Q_T) : = \{  u\in W^{1,1}_p(Q_T): \, \nabla^2 u   \in L_p(Q_T) \}, \  \| u\|_{W^{2,1}_p(Q_T)} = \| u\|_{W^{1,1}_p(Q_T)} +\|\nabla^2 u\|_{L_p(Q_T)}  .
\endgathered
$$
For a Banach space $X$  we denote by $X^*$ the dual space of $X$ with the duality relation $\langle u, w\rangle$, $u\in X$, $w\in X^*$.  The spaces $L_p(0,T; X)$, $p\in [1, +\infty)$  and
$L_\infty (0, T; X)$ are  the space of Banach-valued functions $u:(0,T)\to X$  equipped with the norms
$$
\| u\|_{L_p(0,T; X)} \ := \ \Big(\, \int\limits_0^T \| u(t)\|_X^p \, dt \, \Big)^{1/p}, \qquad
\| u\|_{L_\infty(0,T; X)} \ := \ \esssup\limits_{t\in (0,T)} \| u(t)\|_X.
$$
 $C([0,T] ; X) $ is the space of continuous Banach-valued functions $u:[0,T]\to X$, i.e.
$$
\forall\, t_0\in [0,T] \qquad \| u(t)-u(t_0)\|_X\to 0 \quad \mbox{as} \quad t\to t_0.
$$
The space
$C_w([0, T]; X)$ consists of  Banach-valued functions $u:[0,T]\to X$  which are weakly continuous in $t\in [0,T]$, i.e.
$$
\forall\, w\in X^* \quad  \mbox{the map} \quad t\in [0,T] \ \mapsto  \ \langle u(t), w\rangle \quad \mbox{is continuous on } [0,T].
$$
  For $u: \Om\to \Bbb R$ and $k\in \Bbb R$ we denote  $(u-k)_+:=\max\{ u-k, 0\}$, $(u-k)_-:=  \max\{ k-u, 0\}$.

\medskip
\noindent
{\bf Acknowledgement}. The research  of Timofey Shilkin was partly done during his stay  at the Max Planck Institute for Mathematics in the Sciences (MiS) in Leipzig in 2023-2024. The author thanks  MiS for the hospitality.

\newpage

\section{Auxiliary results}\label{Auxiliary_results}
\setcounter{equation}{0}

\bigskip
\bigskip
The following proposition is proved in \cite[Theorem 1.1]{Porretta}:
\begin{proposition}\label{Strong_continuity_in_time}
Assume $b$, $u_0$, $f$ satisfy \eqref{Problem_Data} and let $u$ be a weak solution to \eqref{Equation} in the sense of Definition \ref{Def_weak_solution}. Then $$\cd_t u\in L_2(0,T; W^{-1}_{2}(\Om)) + L_1(Q_T)$$
and hence
$$
u\in C([0,T]; L_1(\Om)).
$$
\end{proposition}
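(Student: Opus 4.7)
The plan splits into two parts. The first is direct: rearranging \eqref{Equation} in $\mathcal D'(Q_T)$ gives
\[
\cd_t u \ = \ \nu\,\Delta u \,-\, b\cdot \nabla u \,+\, f.
\]
Since $u$ lies in the energy class \eqref{Energy_class}, the term $\nu\,\Delta u$ belongs to $L_2(0,T; W^{-1}_2(\Om))$; the drift $b\cdot \nabla u$ lies in $L_1(Q_T)$ by Cauchy-Schwarz in space-time using $b,\nabla u\in L_2(Q_T)$; and $f\in L_1(Q_T)$ by \eqref{Problem_Data}. This settles the first assertion of the proposition.

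The harder part is to deduce $u\in C([0,T]; L_1(\Om))$ from this decomposition, since the usual Aubin-Lions embedding applies only to the $L_2(0,T;W^{-1}_2(\Om))$ component of $\cd_t u$, while the $L_1(Q_T)$ piece requires a nonlinear argument. The natural route is via Stampacchia truncations. Set $T_k(s):=\sign(s)\min(|s|,k)$ and $\Phi_k(s):=\int_0^s T_k(\tau)\,d\tau$. A standard time-regularisation of $u$ (Steklov averaging) allows one to test the regularised version of \eqref{Integral_Identity} with $T_k(u_h)\,\psi(t)$ for $\psi\in C_c^\infty(0,T)$ and then pass to the limit $h\to 0$, producing the entropy identity
\[
\frac{d}{dt}\int_\Om \Phi_k(u)\,dx \,+\, \nu\int_\Om |\nabla T_k(u)|^2\,dx \ = \ \int_\Om f\,T_k(u)\,dx \,-\, \int_\Om (b\cdot\nabla u)\,T_k(u)\,dx
\]
in $\mathcal D'(0,T)$. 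Because $|T_k(u)|\le k$, the right-hand side lies in $L_1(0,T)$, so $t\mapsto \int_\Om \Phi_k(u(\cdot,t))\,dx$ is absolutely continuous on $[0,T]$; applied to the translates $T_k(u-c)$, $c\in\BR$, the same argument yields continuity of $t\mapsto \int_\Om \Phi_k(u(\cdot,t)-c)\,dx$ for every $c$.

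The main obstacle is promoting continuity of the truncated entropies to strong $L_1$-continuity of $u$ itself. From $u\in L_\infty(0,T; L_2(\Om))$ one has the uniform tail bound $\|u(\cdot,t)-T_k(u(\cdot,t))\|_{L_1(\Om)}\le c/k$, so it suffices to establish continuity of $t\mapsto T_k(u(\cdot,t))$ in $L_1(\Om)$ for each fixed $k$. Applying the entropy identity with convex approximations of $s\mapsto |s-a|$ in place of $\Phi_k$, and exploiting that $T_k(u)$ is bounded, one obtains continuity of $t\mapsto \int_\Om |T_k(u(\cdot,t))-a|\,dx$ for every $a\in\BR$. Combining this with equi-integrability of the family $\{T_k(u(\cdot,t))\}_{t\in[0,T]}$ (trivial since $|T_k(u)|\le k$) via the Dunford-Pettis criterion upgrades to $T_k(u)\in C([0,T]; L_1(\Om))$, and sending $k\to\infty$ delivers the desired conclusion.
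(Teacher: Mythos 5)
First, a remark on the target: the paper does not prove this proposition itself — it is quoted verbatim from \cite[Theorem 1.1]{Porretta} — so your argument has to be measured against that reference rather than against an in-paper proof. Your first step is correct and is exactly what makes that theorem applicable: writing $\cd_t u=\nu\Delta u-b\cdot\nabla u+f$ with $\nu\Delta u\in L_2(0,T;W^{-1}_2(\Om))$ (since $\nabla u\in L_2(Q_T)$), $b\cdot\nabla u\in L_1(Q_T)$ by Cauchy--Schwarz, and $f\in L_1(Q_T)$. The derivation of the entropy identity via Steklov averaging is also in the spirit of the reference (and of Section 4 of the paper), and the reduction from $u$ to $T_k(u)$ via the tail bound $\|u(\cdot,t)-T_k(u(\cdot,t))\|_{L_1(\Om)}\le c/k$, with $c$ controlled by $\|u\|_{L_\infty(0,T;L_2(\Om))}$, is fine.

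The genuine gap is in the final step. Continuity of $t\mapsto\int_\Om|T_k(u(\cdot,t))-a|\,dx$ for every $a$ controls only the one-point distribution of $u(\cdot,t)$; it does not by itself yield continuity of $t\mapsto T_k(u(\cdot,t))$ in $L_1(\Om)$. The Dunford--Pettis criterion you invoke gives weak sequential compactness of $\{T_k(u(\cdot,t_n))\}_n$ in $L_1(\Om)$, but (i) it does not identify the weak limit as $T_k(u(\cdot,t_0))$ — weak $L_2$-continuity of $u$ does not survive composition with the nonlinear map $T_k$ — and (ii) even with the limit identified, weak convergence together with convergence of the convex but \emph{not strictly} convex functionals $v\mapsto\int_\Om|v-a|\,dx$ does not force strong convergence: for $v_n=v\circ\si_n$ with $\si_n$ measure-preserving, all these integrals are constant in $n$ and the family is uniformly bounded, yet $v_n$ need not converge strongly. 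What closes the argument is either a strict-convexity (Visintin/Young-measure) lemma — integrate $|s-a|$ against a positive weight in $a$ to manufacture a strictly convex $\Phi$ with $\int_\Om\Phi(u(\cdot,t_n))\,dx\to\int_\Om\Phi(u(\cdot,t_0))\,dx$ and combine this with $u(\cdot,t_n)\rightharpoonup u(\cdot,t_0)$ from Proposition \ref{Weak_continuity_in_time} — or, as in \cite{Porretta}, a direct estimate of $\int_\Om\Theta_k(u(\cdot,t)-u(\cdot,s))\,dx$ obtained by testing the equation with $T_k(u-u_\mu)$ for a Landes-type time regularization $u_\mu$. Either route works, but as written the decisive step is asserted rather than proved.
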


The next proposition (see, for example,  \cite[Chapter 3,  Lemma 1.4]{Temam}) says that weak solutions in the sense of Definition \ref{Def_weak_solution} are continuous in $t$ with respect to a weak topology in $L_2(\Om)$:
\begin{proposition}\label{Weak_continuity_in_time}
Assume $b$, $u_0$, $f$ satisfy \eqref{Problem_Data}  and let $u$ be a weak solution to \eqref{Equation} in the sense of Definition \ref{Def_weak_solution}. Then $$
u\in C_w([0,T] ; L_2(\Om))$$
which means that we can redefine $u\in L_\infty(0,T; L_2(\Om))$  so that $u(\cdot, t)\in L_2(\Om)$ for any $t\in [0,T]$ and for any $w\in L_2(\Om)$ the function
$$
 t\in [0,T] \ \ \mapsto \ \ \int\limits_\Om u(x,t)w(x)\, dx \quad \mbox{is continuous on } \ [0,T].
$$
\end{proposition}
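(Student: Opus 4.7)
The plan is to combine the strong continuity in $L_1$ provided by Proposition \ref{Strong_continuity_in_time} with the essential $L_2$ bound coming from the inclusion $u\in L_\infty(0,T; L_2(\Om))$, and then identify limits via a Banach--Alaoglu / subsequence argument. By Proposition \ref{Strong_continuity_in_time} I may fix once and for all the representative of $u$ for which $u\in C([0,T]; L_1(\Om))$, so $u(\cdot,t)\in L_1(\Om)$ is well defined for every $t\in [0,T]$. Moreover, from $u\in L_\infty(0,T; L_2(\Om))$ there exist $M>0$ and a set $E\subset [0,T]$ of full Lebesgue measure such that $\|u(\cdot,t)\|_{L_2(\Om)}\le M$ for every $t\in E$.

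The first step is to upgrade the $L_2$ bound from a.e. $t$ to every $t\in [0,T]$. Fix $t_0\in [0,T]$ and pick a sequence $t_k\in E$ with $t_k\to t_0$. The family $\{u(\cdot,t_k)\}$ is bounded in $L_2(\Om)$, so along a subsequence it converges weakly in $L_2(\Om)$ to some $v$. On the other hand $u(\cdot,t_k)\to u(\cdot,t_0)$ in $L_1(\Om)$ by the continuous representative, hence in $\mathcal D'(\Om)$, which forces $v=u(\cdot,t_0)$. Weak lower semicontinuity of the $L_2$-norm then yields $\|u(\cdot,t_0)\|_{L_2(\Om)}\le M$.

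The second step is weak continuity at an arbitrary point $t_0\in [0,T]$. For any sequence $t_k\to t_0$ the vectors $u(\cdot,t_k)$ are uniformly bounded in $L_2(\Om)$ by Step~1. An arbitrary subsequence therefore admits a further subsequence along which $u(\cdot,t_k)\rightharpoonup \tilde u$ in $L_2(\Om)$, and the very same argument as above (using $L_1$-convergence of the continuous representative) identifies the limit as $\tilde u = u(\cdot,t_0)$. Since every subsequence has a further subsequence with the same weak limit, the full sequence converges, i.e.\ for every $w\in L_2(\Om)$
$$
\int\limits_\Om u(x,t_k)\, w(x)\, dx \ \to \ \int\limits_\Om u(x,t_0)\, w(x)\, dx,
$$
proving $u\in C_w([0,T]; L_2(\Om))$.

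The only delicate point, and the step I would be most careful about, is the compatibility between the two representatives: one must be sure that the representative selected for the $L_1$-continuity in Proposition \ref{Strong_continuity_in_time} agrees a.e.\ in $t$ with the $L_\infty(0,T; L_2(\Om))$ representative, so that the $L_2$-bound on $E$ transfers to the continuous representative used in Steps~1--2. This is however automatic, since two representatives of the same element of $L_\infty(0,T; L_2(\Om))$ coincide pointwise outside a null set in $t$, and both steps above only use the values of $u(\cdot,t)$ for $t$ in a set of full measure, plus the $L_1$-continuity to identify the limit at $t_0$. No further regularity of $\cd_t u$ is needed beyond what Proposition \ref{Strong_continuity_in_time} already supplies.
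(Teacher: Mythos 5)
Your argument is correct and is essentially the standard proof of the result the paper invokes here: the paper gives no proof of its own but cites \cite[Chapter 3, Lemma 1.4]{Temam} ($\phi\in L_\infty(0,T;X)$ together with weak continuity into a larger space $Y\supset X$ implies $\phi\in C_w([0,T];X)$ when $X$ is reflexive), and your two steps --- uniform $L_2$-bound at \emph{every} $t$ via weak lower semicontinuity, then identification of weak subsequential limits through the $C([0,T];L_1(\Om))$ representative from Proposition \ref{Strong_continuity_in_time} and the subsequence principle --- are exactly the proof of that lemma with $Y=L_1(\Om)$. The only cosmetic difference is that Temam's lemma is usually fed weak continuity into a negative Sobolev space obtained directly from the equation, whereas you feed it the stronger $L_1$-continuity already recorded in Proposition \ref{Strong_continuity_in_time}; this is legitimate since that proposition precedes the present one, and your remark on the a.e.\ agreement of the two representatives correctly disposes of the one delicate point.
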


The  next result  shows that we can reformulate definition of weak solutions so that the integral identity is satisfied for almost all moments of time:
\begin{proposition}\label{Integral_identity_pointwise}
Assume $b$, $u_0$, $f$ satisfy \eqref{Problem_Data} and let $u$ be a weak solution to \eqref{Equation} in the sense of Definition \ref{Def_weak_solution}. Then    for any $w\in \overset{\circ}{W}{^1_2}(\Om)\cap L_\infty(\Om)$  the
function
$$
t\in [0,T] \ \mapsto \ (u(t),w):=\int\limits_\Om u(x,t)w(x)\, dx \quad \mbox{is absolutely continuous}.
$$
Moreover, for a.e. $t\in (0,T)$ the
following identity holds:
$$
\gathered
\frac{d}{dt}\int\limits_\Om u(x,t) w(x)\, dx    +   \int\limits_\Om \nabla u(x,t)\cdot  \Big( \nu\,\nabla w(x) + b(x,t)\,  w(x)  \Big)\, dx \ =  \, \int\limits_{\Om} f(x,t)w(x)\, dx,\\
\forall w \ \in \overset{\circ}{W}{^1_2}(\Om)\cap L_\infty(\Om).
\endgathered
$$

\end{proposition}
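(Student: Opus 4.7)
The plan is to plug test functions of separated variables $\eta(x,t)=\phi(t)w(x)$ into the integral identity \eqref{Integral_Identity}, reduce the space--time identity to a distributional ODE for $U(t):=\int_\Om u(x,t)w(x)\,dx$, and then exploit the integrability of the right-hand side to upgrade the distributional derivative to the pointwise a.e. identity.

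First I would verify that, for any $w\in \overset{\circ}{W}{^1_2}(\Om)\cap L_\infty(\Om)$ and any $\phi\in C_0^\infty((0,T))$, the function $\eta(x,t):=\phi(t)w(x)$ belongs to the extended class of test functions noted right after Definition \ref{Def_weak_solution}: indeed, $\eta\in L_\infty(Q_T)$, $\nabla \eta=\phi\,\nabla w\in L_2(Q_T)$, $\cd_t\eta=\phi'\,w\in L_2(Q_T)$, $\eta$ vanishes on $\cd\Om\times(0,T)$ since $w\in \overset{\circ}{W}{^1_2}(\Om)$, and $\eta|_{t=T}=0$ because $\supp\phi\Subset(0,T)$. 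Substituting $\eta=\phi w$ into \eqref{Integral_Identity} and applying Fubini yields
$$
\int_0^T \phi'(t)\,U(t)\,dt \ = \ \int_0^T \phi(t)\,H(t)\,dt \qquad \forall\,\phi\in C_0^\infty((0,T)),
$$
where $H(t):=\int_\Om\bigl(\nu\,\nabla u(x,t)\cdot\nabla w(x)+b(x,t)\cdot\nabla u(x,t)\,w(x)-f(x,t)w(x)\bigr)\,dx$. In the language of distributions on $(0,T)$, this says exactly $U'(t)=-H(t)$.

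Next I would check the requisite integrability. Since $u\in L_\infty(0,T;L_2(\Om))$ and $w\in L_2(\Om)$, Cauchy--Schwarz yields $U\in L_\infty(0,T)$, and by Proposition \ref{Weak_continuity_in_time} the weakly continuous representative of $u$ produces a continuous $U\in C([0,T])$. For $H$, the elliptic piece lies in $L_2(0,T)$ because $\nabla u\in L_2(Q_T)$ and $\nabla w\in L_2(\Om)$; the drift piece is dominated by $\|w\|_{L_\infty(\Om)}\|b(t)\|_{L_2(\Om)}\|\nabla u(t)\|_{L_2(\Om)}$, which by Cauchy--Schwarz in $t$ is bounded in $L_1(0,T)$ by $\|w\|_{L_\infty}\|b\|_{L_2(Q_T)}\|\nabla u\|_{L_2(Q_T)}$ using only $b\in L_2(Q_T)$; finally $|\int_\Om fw\,dx|\le\|w\|_{L_\infty}\|f(t)\|_{L_1(\Om)}$ is in $L_1(0,T)$. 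Hence $H\in L_1(0,T)$, so $U\in W^{1,1}(0,T)$ and therefore $U$ admits an absolutely continuous representative which, by continuity of $U$, coincides with $U$ itself. Then $\frac{d}{dt}U(t)=-H(t)$ holds for a.e.\ $t\in(0,T)$, which is the claimed identity.

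The only mildly delicate point is the admissibility of the separated test function: it is not in $C_0^\infty(\Om\times[0,T))$ but only in the enlarged class $W^{1,1}_2(Q_T)\cap L_\infty(Q_T)$ with the stated boundary and terminal vanishing, so one must invoke the approximation remark following Definition \ref{Def_weak_solution} (approximating $w$ by $C_0^\infty(\Om)$ functions in $\overset{\circ}{W}{^1_2}(\Om)\cap L_\infty(\Om)$ via truncation and mollification, and passing to the limit using $b\in L_2(Q_T)$ and $\nabla u\in L_2(Q_T)$). Once this is granted, the rest is a routine application of the standard identification between distributional derivatives in $W^{1,1}(0,T)$ and absolutely continuous functions.
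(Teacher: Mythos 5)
Your proof is correct: testing \eqref{Integral_Identity} with separated test functions $\eta=\phi(t)w(x)$ (admissible via the approximation remark after Definition \ref{Def_weak_solution}), checking that the resulting right-hand side $H$ lies in $L_1(0,T)$ using only $b,\nabla u\in L_2(Q_T)$, $f\in L_1(Q_T)$ and $w\in \overset{\circ}{W}{^1_2}(\Om)\cap L_\infty(\Om)$, and then identifying the $W^{1,1}(0,T)$ function $U$ with its absolutely continuous representative through the weak continuity of $u$ from Proposition \ref{Weak_continuity_in_time} is exactly the standard route. The paper states this proposition without proof, so there is nothing to compare against, but your argument is complete and is the canonical one.
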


Now we define the Steklov averaging in time variable.
Assume $T' \in (0,T)$.  For any $u\in L_2(Q_T)$  and any $0<h<T-T'$ we define
\begin{equation}
u_h (x,t)  \, := \, \frac 1h \, \int\limits_t^{t+h} u(x,\tau)\, d\tau, \qquad x\in \Om, \qquad t\in (0,T').
\label{Steklov_average}
\end{equation}
The following properties of the Steklov averaging are well-known, see \cite[Chapter II, Lemma 4.7]{LSU}:

\begin{proposition}
 Assume $u \in W^{1,0}_2(Q_T)$. Then for any $0<h<T-T'$ we have $$u_h \in W^{1,1}_2(Q_{T'})\cap C([0,T']; W^1_2(\Om))$$ and for a.e. $(x,t)\in Q_{T'}$
$$
\nabla u_h (x,t) \ = \ (\nabla u)_h (x,t),  \qquad \cd_tu_h (x,t) =    \frac{u(x,t+h)-u(x,t)}h.
$$
Moreover, the following convergence holds:
$$
\|   u_h -   u \|_{W^{1,0}_2(Q_{T'})} \ \to \ 0 \qquad \mbox{as} \quad h\to +0.
$$
\end{proposition}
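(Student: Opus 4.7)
The plan is to verify each assertion in turn using only the explicit formula \eqref{Steklov_average} and Fubini's theorem. First I would establish the two derivative identities. The $t$-derivative formula follows immediately from the fundamental theorem of calculus applied to the Lebesgue integral: since $u(x,\cdot)\in L_2(0,T)$ for a.e.\ $x\in\Om$, the map $t\mapsto \int_0^t u(x,\tau)\,d\tau$ is absolutely continuous in $t$, and differentiating the defining formula yields $\cd_t u_h(x,t) = h^{-1}(u(x,t+h)-u(x,t))$ for a.e.\ $(x,t)\in Q_{T'}$. For the spatial identity $\nabla u_h = (\nabla u)_h$, I would test against $\varphi\in C_0^\infty(\Om)$ at fixed $t$, write $\int_\Om u_h(x,t)\,\cd_i\varphi(x)\,dx$ using the definition of $u_h$, interchange the $x$- and $\tau$-integrations by Fubini (justified because $u\in L_2(Q_T)$), apply the definition of the weak gradient $\nabla u$ for a.e.\ $\tau$, and interchange back.

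Next I would verify the regularity claim. The $L_2$ bounds for $u_h$ and $\nabla u_h$ follow from Jensen's inequality
$$
|u_h(x,t)|^2 \, \le \, \frac{1}{h}\int_t^{t+h}|u(x,\tau)|^2\,d\tau,
$$
which on integration over $Q_{T'}$ and Fubini gives $\|u_h\|_{L_2(Q_{T'})}\le \|u\|_{L_2(Q_T)}$ and, combined with the identity $\nabla u_h=(\nabla u)_h$, the same bound for $\|\nabla u_h\|_{L_2(Q_{T'})}$. The bound on $\cd_t u_h$ in $L_2(Q_{T'})$ follows directly from the explicit formula by the triangle inequality (for fixed $h>0$). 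The continuity $u_h\in C([0,T']; W^1_2(\Om))$ then reduces to the continuity of translations in $L_2(0,T;W^1_2(\Om))$: for $0\le t_1<t_2\le T'$ one has
$$
\|u_h(\cdot,t_2)-u_h(\cdot,t_1)\|_{W^1_2(\Om)} \, \le \, \frac{1}{h}\int_{t_1}^{t_2}\|u(\cdot,\tau+h)\|_{W^1_2(\Om)}\,d\tau + \frac{1}{h}\int_{t_1}^{t_2}\|u(\cdot,\tau)\|_{W^1_2(\Om)}\,d\tau,
$$
which tends to $0$ by absolute continuity of the Lebesgue integral since $\|u(\cdot,\tau)\|_{W^1_2(\Om)}\in L_2(0,T)$.

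Finally, for the convergence $\|u_h-u\|_{W^{1,0}_2(Q_{T'})}\to 0$, I would argue by density. Smooth functions (e.g. $C^\infty(\overline{Q_T})$) are dense in $W^{1,0}_2(Q_T)$; for such $v$ the pointwise convergence $v_h\to v$ and $(\nabla v)_h\to \nabla v$ on $Q_{T'}$ is immediate from continuity, and dominated convergence (controlled by the uniform bound derived above) yields convergence in $L_2(Q_{T'})$. The uniform operator bound $\|\cdot\,{}_h\|_{W^{1,0}_2(Q_{T'})\to W^{1,0}_2(Q_{T'})}\le 1$ then transfers the convergence to arbitrary $u\in W^{1,0}_2(Q_T)$ via a standard three-epsilon argument.

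I do not expect any genuinely hard step: the whole argument is bookkeeping built around Fubini, Jensen, and the continuity of translations in $L_2$. The only point that requires a bit of care is the interchange of $\nabla_x$ and the time average, which is handled cleanly through the weak formulation as above; this is also where the stated result is classical (see \cite[Ch.~II, Lemma~4.7]{LSU}).
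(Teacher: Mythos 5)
Your proof is correct. The paper does not prove this proposition at all --- it is stated as a known fact with a reference to \cite[Chapter II, Lemma 4.7]{LSU} --- and your argument is exactly the standard verification (Fubini for $\nabla u_h=(\nabla u)_h$, absolute continuity of the indefinite integral for $\cd_t u_h$, Jensen plus the uniform bound $\|u_h\|_{W^{1,0}_2(Q_{T'})}\le\|u\|_{W^{1,0}_2(Q_T)}$, and a density/three-epsilon argument for the convergence) that the cited lemma encapsulates, so there is nothing to reconcile.
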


For the Steklov averages $u_h$ the following analogue of Proposition \ref{Integral_identity_pointwise} holds:
\begin{proposition}\label{Integral_identity_pointwise_h}
Assume $b$, $u_0$, $f$ satisfy \eqref{Problem_Data} and let $u$ be a weak solution to \eqref{Equation} in the sense of Definition \ref{Def_weak_solution}. Take arbitrary $T' \in (0, T )$ and $ h \in (0, T - T')$. Denote by $u_h$ the Steklov averaging of $u$ defined in \eqref{Steklov_average}.  Then for a.e.  $t\in (0,T')$ the
following identity holds: 
\begin{equation}
\gathered
 \int\limits_\Om \cd_t u_h(x,t) w(x)\, dx  + \int\limits_\Om \Big(\nu\,\nabla u_h(x,t)\cdot  \nabla w(x) + (b\cdot \nabla u)_h(x,t)\,  w(x)\Big)\, dx \, = \, \int\limits_{\Om} f_h(x,t)w(x)\, dx,  \\  \forall\, w\in \overset{\circ}{W}{^1_2}(\Om)\cap L_\infty(\Om).
\endgathered \label{II_h}
\end{equation}
\end{proposition}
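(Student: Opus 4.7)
The plan is to deduce \eqref{II_h} directly from Proposition \ref{Integral_identity_pointwise} by averaging the pointwise-in-time identity over the window $(t,t+h)$ and dividing by $h$. Fix $w\in \overset{\circ}{W}{^1_2}(\Om)\cap L_\infty(\Om)$, $T'\in(0,T)$, and $h\in(0,T-T')$. Proposition \ref{Integral_identity_pointwise} provides, for a.e.\ $\tau\in(0,T)$, the identity
$$
\tfrac{d}{d\tau}(u(\tau),w)\, +\, \nu\!\int_\Om \nabla u(\tau)\cdot\nabla w\,dx\, +\, \int_\Om (b\cdot\nabla u)(\tau)\,w\,dx \ = \ \int_\Om f(\tau)\, w\,dx.
$$
Each spatial integrand is an $L_1(0,T)$ function of $\tau$: indeed $\nabla u\cdot\nabla w\in L_1(Q_T)$ by Cauchy--Schwarz, $(b\cdot\nabla u)w\in L_1(Q_T)$ since $b,\nabla u\in L_2(Q_T)$ and $w\in L_\infty(\Om)$, and $fw\in L_1(Q_T)$ since $f\in L_1(Q_T)$. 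The derivative term belongs to $L_1(0,T)$ by the absolute continuity of $\tau\mapsto (u(\tau),w)$. Hence the identity can be integrated over $(t,t+h)$ for every admissible $t$.

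Next I would evaluate each of the four resulting averaged terms. The fundamental theorem of calculus for absolutely continuous functions turns the first one into $\tfrac1h[(u(t+h),w)-(u(t),w)]$, which by the pointwise formula $\cd_t u_h(x,t)=(u(x,t+h)-u(x,t))/h$ equals $\int_\Om \cd_t u_h(x,t)\,w(x)\,dx$. For the three remaining terms I apply Fubini to interchange $\tfrac1h\!\int_t^{t+h}(\cdot)\,d\tau$ with $\int_\Om(\cdot)\,dx$, producing respectively $\nu\int_\Om (\nabla u)_h(x,t)\cdot \nabla w(x)\,dx$, $\int_\Om (b\cdot\nabla u)_h(x,t)\,w(x)\,dx$, and $\int_\Om f_h(x,t)\,w(x)\,dx$. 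Invoking the commutation $(\nabla u)_h=\nabla u_h$ from the preceding proposition rewrites the viscous term as $\nu\int_\Om \nabla u_h\cdot\nabla w\,dx$. Assembling these yields exactly \eqref{II_h} for every $t\in(0,T')$ whose corresponding $\tau$-window avoids the null set of measure zero on which the pointwise identity fails, i.e.\ for a.e.\ $t\in(0,T')$.

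No step here is genuinely hard, but the one that merits attention is the Fubini swap on the convective term: with only $b\in L_2(Q_T)$ assumed, one cannot in general assert that $(b\cdot\nabla u)_h=b_h\cdot \nabla u_h$, so the nonlinear product must be treated as a single $L_1(Q_T)$ function before averaging. The procedure above naturally produces the correct object $(b\cdot\nabla u)_h$ because the product is already present inside the $\tau$-integral from the outset; this explains why \eqref{II_h} is formulated with $(b\cdot\nabla u)_h$ rather than with $b$ and $\nabla u$ averaged separately, and it is what makes the identity a useful tool in subsequent arguments where $b$ has no regularity beyond $L_2$.
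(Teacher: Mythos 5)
Your argument is correct. The paper states Proposition \ref{Integral_identity_pointwise_h} without proof, treating it as a standard consequence of Proposition \ref{Integral_identity_pointwise} and the listed properties of Steklov averages, and your derivation --- integrate the pointwise identity in $\tau$ over $(t,t+h)$, use the fundamental theorem of calculus for the absolutely continuous map $t\mapsto (u(t),w)$ together with $\cd_t u_h=(u(\cdot,t+h)-u(\cdot,t))/h$, and apply Fubini plus $(\nabla u)_h=\nabla u_h$ to the remaining terms --- is exactly the intended route; your remark that the convective term must be averaged as the single $L_1$ function $b\cdot\nabla u$ rather than factorwise is the right observation about why the identity is stated with $(b\cdot\nabla u)_h$.
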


Now we define the truncation operator. For any $\dl>0$   denote by ${\mathcal T}_\dl \in \operatorname{Lip}(\Bbb R)$ the function
\begin{equation}
 {\mathcal T}_\dl(s)  \ := \ \left\{ \begin{array}{cl} 0, & s\le 0  \\ s, & 0<s< \dl \\  \dl , & s\ge \dl. \end{array} \right.
\label{Truncation_operator}
\end{equation}
Then  the composition ${\mathcal T}_\dl\circ u$  with a Sobolev function $u$ possesses the following properties:

\begin{proposition}
 Assume $u \in W^{1,1}_2(Q_T)$, Then for any $\dl>0$  we have  $${\mathcal T}_\dl(u) \in W^{1,1}_2(Q_T)\cap L_\infty(Q_T)$$ and
 $$
\nabla {\mathcal T}_\dl(u) \ = \ {\mathcal T}_\dl'(u)\, \nabla u, \qquad \cd_t {\mathcal T}_\dl (u) = {\mathcal T}_\dl'(u)\, \cd_t u \qquad \mbox{a.e. in} \quad Q_T.
$$
Moreover, if   $u_h$ is  defined by \eqref{Steklov_average} then
for any $T'\in (0,T)$
$$
\| {\mathcal T}_\dl(u_h) - {\mathcal T}_\dl(u) \|_{L_2(Q_{T'})} \ \to \ 0 \qquad \mbox{as} \quad h\to +0.
$$
\end{proposition}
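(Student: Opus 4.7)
Both assertions rest on the simple observation that the truncation ${\mathcal T}_\dl$ defined in \eqref{Truncation_operator} is a bounded, globally $1$-Lipschitz map on $\Bbb R$ with $\|{\mathcal T}_\dl\|_{L_\infty(\Bbb R)}=\dl$ and ${\mathcal T}_\dl'(s)=\chi_{(0,\dl)}(s)$ for a.e.\ $s\in\Bbb R$. The first claim is the classical chain rule for the composition of a Lipschitz function with a Sobolev function; the second is an immediate Lipschitz continuity estimate combined with the $L_2$-convergence of Steklov averages recorded in the previous proposition.

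\textbf{Chain rule.} The bound $|{\mathcal T}_\dl(u)|\le\dl$ is immediate, so ${\mathcal T}_\dl(u)\in L_\infty(Q_T)$. To identify the weak derivatives I would regularize ${\mathcal T}_\dl$ by a standard one-variable mollification: picking a smooth non-negative kernel $\rho_\ep$ on $\Bbb R$, I set ${\mathcal T}_\dl^\ep:=\rho_\ep\ast{\mathcal T}_\dl\in C^\infty(\Bbb R)$. Each ${\mathcal T}_\dl^\ep$ is $1$-Lipschitz, converges uniformly to ${\mathcal T}_\dl$, and $({\mathcal T}_\dl^\ep)'\to{\mathcal T}_\dl'$ pointwise a.e. on $\Bbb R$. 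Since $u\in W^{1,1}_2(Q_T)$, the classical smooth chain rule applied to ${\mathcal T}_\dl^\ep\circ u$ yields $\nabla({\mathcal T}_\dl^\ep(u))=({\mathcal T}_\dl^\ep)'(u)\nabla u$ and $\cd_t({\mathcal T}_\dl^\ep(u))=({\mathcal T}_\dl^\ep)'(u)\cd_t u$ a.e.\ in $Q_T$. Passing to the limit $\ep\to 0$ by dominated convergence, with $L_2$-envelopes $|\nabla u|,\,|\cd_t u|\in L_2(Q_T)$, then gives ${\mathcal T}_\dl(u)\in W^{1,1}_2(Q_T)$ together with $\nabla{\mathcal T}_\dl(u)=\chi_{\{0<u<\dl\}}\nabla u$ and $\cd_t{\mathcal T}_\dl(u)=\chi_{\{0<u<\dl\}}\cd_t u$.

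\textbf{Handling the two non-smooth points.} To match these expressions with the products ${\mathcal T}_\dl'(u)\nabla u$ and ${\mathcal T}_\dl'(u)\cd_t u$ written in the statement I would invoke the standard Stampacchia result: for any Sobolev function, the weak derivative vanishes a.e.\ on each level set. Thus $\nabla u=0$ and $\cd_t u=0$ a.e. on $\{u=0\}\cup\{u=\dl\}$, so the ambiguity at the two points where ${\mathcal T}_\dl$ is not differentiable is harmless and the identities as stated hold a.e.\ in $Q_T$. This is the only genuinely delicate point of the whole argument; everything else reduces to smooth calculus and dominated convergence.

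\textbf{Convergence of the truncated Steklov averages.} For the second part I would simply use the $1$-Lipschitz property of ${\mathcal T}_\dl$ to write
\[
|{\mathcal T}_\dl(u_h)-{\mathcal T}_\dl(u)|\ \le\ |u_h-u| \qquad \text{a.e.\ in } Q_{T'},
\]
which upon squaring and integrating gives $\|{\mathcal T}_\dl(u_h)-{\mathcal T}_\dl(u)\|_{L_2(Q_{T'})}\le \|u_h-u\|_{L_2(Q_{T'})}$. The right-hand side tends to zero as $h\to+0$ by the previous Steklov proposition, which asserts $\|u_h-u\|_{W^{1,0}_2(Q_{T'})}\to 0$, and the proof is complete.
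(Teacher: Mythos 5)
Your proof is correct. The paper states this proposition without proof, as a standard fact about compositions of Lipschitz functions with Sobolev functions; your argument (mollify ${\mathcal T}_\dl$, apply the smooth chain rule, pass to the limit by dominated convergence, and dispose of the non-differentiability points $s=0,\dl$ via the fact that weak derivatives vanish a.e.\ on level sets) is precisely the standard textbook proof, and the $1$-Lipschitz bound combined with $\|u_h-u\|_{L_2(Q_{T'})}\to 0$ from the preceding Steklov proposition correctly yields the final convergence claim.
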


Finally, we formulate the condition on boundedness  of  an integrable function, see a similar result in \cite[Theorem 6.1]{LSU}:

 \begin{proposition}\label{Level_sets_Theorem}
  Assume $u$ belongs to the energy class \eqref{Energy_class} and assume that there exists $k_0\ge 0$, $F> 0$, $\ga>0$ such that for any $k\ge k_0$ the following estimate holds:
\begin{equation}\label{Level_sets}
  \esssup\limits_{0<t<T} \int\limits_{\Om} |(u-k)_+|^2\, dx \ + \ \int\limits_{Q_T}|\nabla (u-k)_+|^2\, dxdt \ \le \ F^2\,  |A_k|^{1-\frac 2{n+2}+2\ga},
\end{equation}
  where
  $$
  A_k \, := \, \{ \, (x,t)\in Q_T: \, u(x,t)> k\, \}.
  $$
   Then $(u-k_0)_+\in L_\infty(Q_T)$ and
\begin{equation}\label{Boundedness}
  \| (u-k_0)_+\|_{L_\infty(Q_T)} \ \le  \ c\,  F ,
\end{equation}
  with some constant $c>0$ depending only on $n$, $\Om$, $T$ and $\ga$.
\end{proposition}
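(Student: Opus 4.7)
The plan is a standard De Giorgi iteration on the super-level sets $A_k$. The only ingredient beyond hypothesis \eqref{Level_sets} is the parabolic Sobolev embedding
$$
\|v\|_{L_q(Q_T)}^2 \ \le \ c\,\bigl(\,\|v\|_{L_\infty(0,T;L_2(\Om))}^2 + \|\nabla v\|_{L_2(Q_T)}^2\,\bigr), \qquad q \, := \, \tfrac{2(n+2)}{n},
$$
valid for every $v \in L_\infty(0,T;L_2(\Om)) \cap L_2(0,T;\WWW{^1_2}(\Om))$. First I would apply it to the truncations $v_k := (u-k)_+$ (which lie in this class for any $k \ge 0$ since $u$ belongs to \eqref{Energy_class}) and combine with \eqref{Level_sets} to obtain
$$
\|v_k\|_{L_q(Q_T)}^2 \ \le \ c F^2 \,|A_k|^{1 - \frac{2}{n+2} + 2\ga}, \qquad \forall\, k \ge k_0.
$$

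Next, for $k > h \ge k_0$ the pointwise inequality $v_h \ge k - h$ on $A_k$ yields
$$
(k-h)^q\,|A_k| \ \le \ \int_{A_k} v_h^q\, dxdt \ \le \ \|v_h\|_{L_q(Q_T)}^q \ \le \ c F^q \,|A_h|^{\frac{q}{2}\bigl(1 - \frac{2}{n+2} + 2\ga\bigr)}.
$$
Since $\frac{q}{2}\bigl(1-\frac{2}{n+2}\bigr) = 1$, this is the key recursion
$$
|A_k| \ \le \ \frac{c F^q}{(k-h)^q}\,|A_h|^{1+\ep}, \qquad \ep \, := \, \tfrac{2(n+2)}{n}\,\ga \, > \, 0.
$$

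The last step is the Stampacchia iteration lemma. Setting $k_j := k_0 + d(1 - 2^{-j})$ and $\phi_j := |A_{k_j}|$, an induction based on the recursion above shows that $\phi_j \le \phi_0 \cdot 2^{-j q / \ep}$ provided $d$ is chosen so that
$$
d \ \ge \ c'\,F\,\phi_0^{\,\ep/q}, \qquad \mbox{which is ensured by the choice} \qquad d \, := \, c'\,F\,|Q_T|^{\,\ep/q}.
$$
The arithmetic identity $\ep/q = \ga$ then yields $\phi_j \to 0$, so $|A_{k_0+d}| = 0$ and therefore $(u-k_0)_+ \le d \le c\,F$ a.e.\ on $Q_T$ with $c = c'\,|Q_T|^{\ga}$ depending only on $n$, $\Om$, $T$ and $\ga$. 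The main ``obstacle'' is purely bookkeeping: one must verify the two exponent identities $\frac{q}{2}\bigl(1-\frac{2}{n+2}\bigr) = 1$ and $\ep/q = \ga$, which together force linearity of the final bound in $F$ and the stated constant dependence. No serious analytic difficulty arises, as the parabolic embedding and the Stampacchia lemma are entirely standard.
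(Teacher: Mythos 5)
Your proof is correct and follows essentially the same route as the paper: both hinge on the parabolic embedding $L_\infty(0,T;L_2)\cap W^{1,0}_2\hookrightarrow L_{2+\frac4n}(Q_T)$ applied to $(u-k)_+$ together with hypothesis \eqref{Level_sets}, followed by a De Giorgi--Stampacchia level-set iteration. The only (harmless) difference is that the paper reduces to the inequality $\int_{A_k}(u-k)_+\,dxdt\le c\,F\,|A_k|^{1+\ga}$ and then simply asserts the conclusion via the classical lemma, whereas you iterate the measures $|A_k|$ directly and carry out the Stampacchia iteration explicitly, with the exponent bookkeeping checked correctly.
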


\begin{proof}
 Assume \eqref{Level_sets} holds. Then for $k\ge k_0$ by the H{\" o}lder inequality we have
 $$
 \int\limits_{A_k} (u-k)_+\, dxdt \ \le \ |A_k|^{1-\frac n{2(n+2)}} \, \|(u-k)_+\|_{L_{2+\frac 4n}(Q_T)} .
 $$
 From the parabolic imbedding
\begin{equation}
L_{\infty}(0,T; L_2(\Om))\cap W^{1,0}_2(Q_T)\hookrightarrow L_{2+\frac 4n}(Q_T)
\label{Parabolic_imbedding}
\end{equation}
  we have
 $$
 \| (u-k)_+\|_{L_{2+\frac 4n}(Q_T)}  \ \le \ c\, \Big( \|(u-k)_+\|_{L_\infty(0,T;L_2(\Om))} + \|\nabla (u-k)_+\|_{L_2(Q_T)}\Big)
 $$
 with some constant $c>0$ depending only on $n$, $\Om$ and $T$. Combining this inequality with \eqref{Level_sets} we arrive at
 $$
 \int\limits_{A_k} (u-k)_+\, dxdt \ \le \ c\, F\, |A_k|^{1+\ga}, \qquad \forall \, k\ge k_0,
 $$
 which gives \eqref{Boundedness}.
\end{proof}

\newpage

\section{Existence and boundedness of weak solutions}\label{Existence}
\setcounter{equation}{0}

\bigskip
The natural approach to prove the existence of a weak solution to the problem \eqref{Equation} is to approximate the drift $b\in L_2(Q_T)$ satisfying the non-spectral condition \eqref{Non-spectral} by smooth functions $b^m$.  Some technical difficulty is related to the fact that   one needs to preserve the condition \eqref{Non-spectral}. In the case of the divergence free drifts this difficulty can be   overcomed if we introduce the vector potential of $b$, extend this vector potential outside $\Om$ and mollify  the extended function. But in the case of  the condition \eqref{Non-spectral} besides the divergence free part of the drift  one needs also to extend the potential part of $b$. This  leads to a non-trivial problem of the extension of a subharmonic function. To  avoid it  we approximate    in Proposition   \ref{Approximation_Theorem}  below  not only the drift, right-hand side  and the initial data of the problem \eqref{Equation}, but also the domain $\Om$, see similar  ideas in \cite{Kwon_3}, \cite{Lee}, \cite{Verchota_1}, \cite{Verchota_2}.
The advantage of this approach is that now we can  mollify the drift  $b\in L_2(Q_T)$   so that the non-spectral condition \eqref{Non-spectral} is preserved on   subdomains of $Q_T$. Note that this construction allows to control  not only Lebesgue norms of the approximations $b^m$, but also some more complicated norms such as critical Morrey-type norms, see, for example, \cite{ChSh-1}.

Assume  Lipschitz domains
$\Om_m \subset\Bbb R^n$   satisfy  the following properties:
\begin{equation}
\gathered
\Om_{m}\subset \Om_{m+1}\subset \Om, \qquad \forall\, m\in \Bbb N, \qquad   \bigcup\limits_{m=1}^\infty \Om_m = \Om.
\endgathered
\label{Omega_m}
\end{equation}
(Here we also  allow the case $\Om_m=\Om$ for all $m\in \Bbb N$). Then the following proposition holds:

\begin{proposition}
  \label{Approximation_Theorem}
Assume $\{ \Om_m\}_{m=1}^\infty$ satisfy \eqref{Omega_m}, $u_0^m\in L_2(\Om)$, $f^m\in L_2(0,T; W^{-1}_2(\Om))$.
 Let $b^m\in C^\infty(\bar Q_T)$ satisfy the assumption
\begin{equation}
\div b^m \ \le \ 0 \quad \mbox{in} \quad  Q_T^m:= \Om_m\times (0,T).
\label{Non-spectral_m}
\end{equation}
Denote by $u^m$ the unique weak solution to the problem
\begin{equation}\label{Equation_m}
\left\{ \quad \gathered  \cd_t u^m  -\nu\,\Delta u^m + b^m\cdot \nabla u^m   \ = \  f^m \qquad \mbox{in} \quad Q^m_T, \\
u^m|_{\cd\Om_m\times (0,T)} \ = \ 0, \qquad \qquad  \qquad \\
u^m|_{t=0} \ = \ u^m_0, \qquad \qquad  \qquad  \endgathered \right.
\end{equation}
and extend $u^m$ by zero from $Q_T^m $ onto $\Bbb R^n\times (0,T)$. Then the following estimate holds
\begin{equation}\label{Energy_estimate_m}
\|  u^m\|_{L_\infty(0,T; L_2(\Om))}  \ + \  \sqrt{\nu}\, \| \nabla u^m\|_{L_2(Q_T)}  \, \le \, c\, \Big( \|u_0^m\|_{L_2(\Om)} + \|f^m \|_{L_2(0,T; W^{-1}_2(\Om))} \Big),
\end{equation}
where the constant $c>0$ depends only on $n$, $\Om$ and $T$.
\end{proposition}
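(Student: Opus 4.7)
\bigskip

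The plan is to combine a standard existence/uniqueness statement for the linear problem \eqref{Equation_m} with a uniform energy estimate obtained by testing with $u^m$ itself; the non-spectral condition \eqref{Non-spectral_m} will serve precisely to kill the convective term.

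\medskip

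First, for each fixed $m$ the drift $b^m$ is smooth and bounded on $\bar Q_T$, the domain $\Om_m$ is Lipschitz and $f^m\in L_2(0,T; W^{-1}_2(\Om))\subset L_2(0,T; W^{-1}_2(\Om_m))$ (the latter inclusion holding because a function $\eta\in \overset{\circ}{W}{^1_2}(\Om_m)$ extends by zero to $\overset{\circ}{W}{^1_2}(\Om)$ with the same norm, so duality gives $\|f^m\|_{L_2(0,T; W^{-1}_2(\Om_m))}\le \|f^m\|_{L_2(0,T; W^{-1}_2(\Om))}$). Hence the existence of a unique weak solution $u^m$ in the energy class on $Q_T^m$ is a standard fact from the classical theory of linear parabolic equations (Galerkin/Lions method); I would just cite this.

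\medskip

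The core of the argument is the uniform estimate \eqref{Energy_estimate_m}. I would use the Steklov averaging from Proposition \ref{Integral_identity_pointwise_h} applied on $Q_T^m$ and take the admissible test function $w=(u^m)_h(\cdot,t)$; after passing $h\to +0$ and using that $u^m\in C([0,T]; L_2(\Om_m))$ (since the time derivative of $u^m$ lies in $L_2(0,T; W^{-1}_2(\Om_m))$ once $b^m$ is smooth), one arrives at the classical energy identity
\begin{equation*}
\tfrac12\tfrac{d}{dt}\|u^m(\cdot,t)\|_{L_2(\Om_m)}^2 \;+\; \nu\|\nabla u^m(\cdot,t)\|_{L_2(\Om_m)}^2 \;+\; \int_{\Om_m} (b^m\cdot \nabla u^m)\, u^m\,dx \;=\; \langle f^m(\cdot,t), u^m(\cdot,t)\rangle
\end{equation*}
for a.e.\ $t\in (0,T)$. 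The convective term is rewritten, using the Dirichlet condition $u^m|_{\cd\Om_m}=0$ and integration by parts, as
\begin{equation*}
\int_{\Om_m} (b^m\cdot \nabla u^m)\, u^m\,dx \;=\; \tfrac12 \int_{\Om_m} b^m\cdot \nabla (u^m)^2\,dx \;=\; -\tfrac12 \int_{\Om_m} (\div b^m)\,(u^m)^2\,dx \;\ge\; 0,
\end{equation*}
where the last inequality is exactly \eqref{Non-spectral_m}. This is the only place where the non-spectral assumption enters, and it is the decisive step of the proof.

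\medskip

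Dropping this non-negative term and estimating the right-hand side by Young's inequality,
\begin{equation*}
|\langle f^m, u^m\rangle| \;\le\; \|f^m\|_{W^{-1}_2(\Om_m)}\,\|\nabla u^m\|_{L_2(\Om_m)} \;\le\; \tfrac{\nu}{2}\|\nabla u^m\|_{L_2(\Om_m)}^2 \;+\; \tfrac{1}{2\nu}\|f^m\|_{W^{-1}_2(\Om_m)}^2,
\end{equation*}
we absorb the gradient term on the left and integrate in time from $0$ to any $t\in (0,T]$ to obtain
\begin{equation*}
\|u^m(\cdot,t)\|_{L_2(\Om_m)}^2 \;+\; \nu\int_0^t \|\nabla u^m\|_{L_2(\Om_m)}^2\,d\tau \;\le\; \|u_0^m\|_{L_2(\Om_m)}^2 \;+\; \tfrac{1}{\nu}\|f^m\|_{L_2(0,T;W^{-1}_2(\Om_m))}^2.
\end{equation*}
Taking the supremum in $t$ and using the embeddings $\|u_0^m\|_{L_2(\Om_m)}\le \|u_0^m\|_{L_2(\Om)}$ and $\|f^m\|_{L_2(0,T;W^{-1}_2(\Om_m))}\le \|f^m\|_{L_2(0,T;W^{-1}_2(\Om))}$ discussed above, together with the extension of $u^m$ by zero (which preserves $L_\infty L_2$ and $L_2$ norms of the gradient since $u^m$ vanishes on $\cd\Om_m$), yields exactly \eqref{Energy_estimate_m} with a constant depending only on $n$, $\Om$ and $T$.

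\medskip

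The only real difficulty is the justification of testing with $u^m$ itself (since $u^m$ lies in the energy class but a priori not in $L_\infty$), which is handled by the Steklov averaging device of Proposition \ref{Integral_identity_pointwise_h}: test \eqref{II_h} with $w={\mathcal T}_\dl(u^m_h)$, pass $h\to 0$, and then $\dl\to\infty$. Everything else is a direct computation and the extension-by-zero bookkeeping that lets the constants be stated in terms of the outer domain $\Om$ rather than the approximating $\Om_m$.
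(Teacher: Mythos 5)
Your proof is correct and takes essentially the same route as the paper's, which simply tests \eqref{Equation_m} with $u^m$, integrates by parts in the convective term and uses \eqref{Non-spectral_m} to drop it as non-negative, explicitly omitting the remaining standard details (the Steklov-averaging justification and the extension-by-zero bookkeeping that you supply). One small remark: your Young-inequality step leaves a factor $\tfrac{1}{\nu}\|f^m\|^2_{L_2(0,T;W^{-1}_2(\Om_m))}$ on the right, so the constant you actually obtain depends on $\nu$ as well as on $n$, $\Om$, $T$ — this matches what the argument genuinely gives, and the $\nu$-independence claimed in \eqref{Energy_estimate_m} is an imprecision already present in the paper's statement rather than a defect of your proof.
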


\begin{proof}
The proof of Proposition \ref{Approximation_Theorem} is standard and can be obtained by testing the equations \eqref{Equation_m} by $u^m$. As the drift $b^m\in C^\infty(\bar Q_T)$ satisfies \eqref{Non-spectral_m} integrating by parts we obtain
$$
\int\limits_{\Om_m} b^m\cdot \nabla u^m \, u^m \, dx  \ = \ - \frac 12\, \int\limits_{\Om_m} \div b^m |u^m|^2\, dx \ \ge \ 0
$$
and hence we can drop this term. So, we arrive at \eqref{Energy_estimate_m}. We omit the details.
\end{proof}

Now we can prove Theorem \ref{Existence_Theorem}.

\begin{proof} Consider any sequence of Lipschitz domains $\Om_{m}\Subset \Om_{m+1} \Subset \Om$ satisfying \eqref{Omega_m}. Denote by $\om_\ep(x,t)$ a standard Sobolev kernel with respect to $(x,t)$-variable:
$$
\om_\ep(x,t) = \tfrac 1{\ep^{n+1}}\, \om\left(\tfrac x\ep, \tfrac t\ep\right), \qquad \om \in C^\infty_0(B \times \Bbb R), \qquad \int\limits_{-\infty}^{+\infty}\int\limits_{\Bbb R^n} \om(x,t)\, dx dt =1.
$$
Take $\ep_m\to +0$ so that $\ep_m <\dist \{ \bar \Om_m, \cd \Om \}$ and denote by $b^m$ the convolution
$$
b^m(x,t) = \int\limits_{-\infty}^{+\infty}\int\limits_{\Bbb R^n} \om_{\ep_m}(x-y,t-\tau)\,  b(y,\tau)\, dy d\tau.
$$
Then $b^m\in C^\infty(\bar Q_T)$ satisfies \eqref{Non-spectral_m} and
\begin{equation}\label{Conv_1}
\| b^m - b\|_{L_2(Q_T)} \to 0 \qquad \mbox{as} \qquad m\to \infty.
\end{equation}
Assume also  $u^m_0 \in C^\infty(\bar \Om)$ and $f^m\in C^\infty(\bar Q_T)$ satisfy
\begin{equation}\label{Conv_2}
\| u^m_0 - u_0\|_{L_2(\Om)} \to 0, \qquad
\| f^m - f\|_{L_2(0,T; W^{-1}_2(\Om))} + \| f^m -f\|_{L_1(Q_T) }\to 0
\end{equation}
as $m\to \infty$.
Let $u^m$ be weak solutions to the problem \eqref{Equation_m}. Using \eqref{Energy_estimate_m} we  conclude there exists $u$  belonging to \eqref{Energy_class}  and  a subsequence $u^m$ such that
\begin{equation}\label{Conv_3}
u^m  \ \rightharpoonup \  u \qquad \mbox{in} \quad W^{1,0}_2(Q_T).
\end{equation}
Take any $\eta \in C_0^\infty(\Om\times [0,T))$. Then for sufficiently large $m$ we obtain $\operatorname{supp} \eta \subset \Om_m \times [0, T)$. Using \eqref{Conv_1}, \eqref{Conv_2}, \eqref{Conv_3} we can pass to the limits in the identity
$$
\int\limits_{Q_T} \Big( -u^m\cd_t\eta + \nu \, \nabla u^m\cdot \nabla \eta \Big)\, dxdt \ + \ \mathcal B[u^m,\eta] \ = \ \int\limits_\Om u_0^m(x)\eta(x,0)\, dx \ + \ \int\limits_{Q_T}  f^m \eta \, dxdt
$$
  and arrive at \eqref{Integral_Identity}. Hence $u$ is a weak solution to the problem \eqref{Equation}.  The energy estimate follows automatically from \eqref{Energy_estimate_m}.
\end{proof}

\newpage

\section{Proof of Theorems  \ref{Main_Theorem} and \ref{Stability_Theorem}}\label{Stability}
\setcounter{equation}{0}

\bigskip

We start from the proof of  Theorem \ref{Main_Theorem}.

 \begin{proof}
Take arbitrary  $T'\in (0, T)$ and $\dl>0$. For any $h\in (0,T-T')$ denote by $u_h$ the Steklov average of $u$ defined in \eqref{Steklov_average} and denote by ${\mathcal T}_\dl$ the truncation operator defined in \eqref{Truncation_operator}.
Then for any $t\in [0,T']$ we have
$$
{\mathcal T}_\dl (u_h(\cdot ,t))  \in \overset{\circ}{W}{^1_2}(\Om)\cap L_\infty(\Om).
$$
Hence we can fix a.e. $t\in (0,T')$ and take $w= {\mathcal T}_\dl( u_h(\cdot, t)) $ in the identity  \eqref{II_h}. For a.e. $t\in (0,T')$ we obtain
\begin{equation}
\gathered
\int\limits_{\Om}\Big(  {\mathcal T}_\dl(u_h)\cd_t  u_h  +\nu\, \nabla u_h\cdot  \nabla {\mathcal T}_\dl (u_h ) + (b \cdot \nabla u)_h  {\mathcal T}_\dl( u_h )\Big)\, dx  =  \int\limits_{\Om} f_h {\mathcal T}_\dl( u_h)\, dx.
\endgathered
\label{II_with_T}
\end{equation}
We claim that the first term in \eqref{II_with_T} for a.e. $t\in (0, T')$  satisfies the identity
\begin{equation}
\int\limits_{\Om}   {\mathcal T}_\dl(u_h(x,t))\cd_t  u_h(x,t) \, dx  \ = \ \frac{d}{dt}\Big( \tfrac 12\, \| {\mathcal T}_\dl(u_h(\cdot, t))\|_{L_2(\Om)}^2 + \dl \, \|(u_h(\cdot, t)-\dl)_+\|_{L_1(\Om)}\Big).
\label{Main_Lemma}
\end{equation}
Indeed, taking into account the identity
$$
\cd_t {\mathcal T}_\dl( u_h) = {\mathcal T}_\dl'(u_h)\, \cd_t u_h \qquad \mbox{a.e. in} \quad Q_{T'}
$$
for a.e. $t\in (0,T')$ we obtain the relation
\begin{equation}
\gathered
\frac 12\,  \frac{d}{dt}\| {\mathcal T}_\dl(u_h(\cdot, t))\|_{L_2(\Om)}^2 \, = \, \int\limits_{D_\dl^h(t)} {\mathcal T}_\dl(u_h(x,t)) \cd_t u_h(x,t)\, dx \, = \\ =  \, \int\limits_{\Om} {\mathcal T}_\dl(u_h(x,t)) \cd_t u_h(x,t)\, dx \, - \, \dl \, \int\limits_{A_\dl^h(t)} \cd_t u_h(x,t) \, dx,
 \endgathered
 \label{1}
 \end{equation}
 where we denote
 $$
 D_\dl^h(t):= \{\, x\in \Om: ~0 \le u_h(x,t)  \le \dl \, \}, \qquad A_\dl^h (t):= \{ \, x\in \Om:~u_h(x,t) > \dl \, \}.
 $$
 On the other hand,  taking into account $(u_h-\dl)_+\in W^{1,1}_2(Q_{T'})$ and
 $$
 \cd_t (u_h-\dl )_+ \, = \, \chi(u_h -\dl )\, \cd_t u_h \qquad \mbox{a.e. in}\quad Q_{T'},
 $$
 where we denote by $\chi(s)$ the standard Heaviside step function,
 for a.e. $t\in (0,T')$  we obtain
\begin{equation}
  \frac{d}{dt}\int\limits_\Om (u_h(x,t)-\dl)_+\, dx \, = \, \int\limits_{A_\dl^h(t)} \cd_t u_h(x,t)\, dx.
  \label{2}
 \end{equation}
  Combining \eqref{1} and \eqref{2} we obtain \eqref{Main_Lemma}.

  Note that the second  term in \eqref{II_with_T} is non-negative:
  \begin{equation}
  \nu\,\int\limits_{\Om}   \nabla u_h\cdot  \nabla {\mathcal T}_\dl (u_h )\, dx \, = \,\nu\, \int\limits_{\Om} |\nabla {\mathcal T}_\dl(u_h)|^2\, dx \ \ge \  0.
  \label{Viscos_term}
  \end{equation}
  So, taking into account \eqref{Main_Lemma} and \eqref{Viscos_term} from \eqref{II_with_T} for a.e. $t\in (0, T' )$ we obtain the inequality
  $$
  \frac{d}{dt}\Big( \tfrac 12\, \| {\mathcal T}_\dl(u_h(\cdot, t))\|_{L_2(\Om)}^2 + \dl \, \|(u_h(\cdot, t)-\dl)_+\|_{L_1(\Om)}\Big) \ \le \ - \int\limits_\Om (b \cdot \nabla u)_h  {\mathcal T}_\dl( u_h ) \, dx \ + \  \int\limits_{\Om} f_h {\mathcal T}_\dl( u_h)\, dx.
  $$
  Taking arbitrary $0\le t_1< t_2\le T'$ and integrating the last inequality over $t\in (t_1,t_2)$ we arrive at
\begin{equation}
  \gathered
  \tfrac 12\, \| {\mathcal T}_\dl(u_h(\cdot, t_2))\|_{L_2(\Om)}^2 + \dl \, \|(u_h(\cdot, t_2)-\dl)_+\|_{L_1(\Om)} \ \le \\ \le  \ \tfrac 12\, \| {\mathcal T}_\dl(u_h(\cdot, t_1))\|_{L_2(\Om)}^2 + \dl \, \|(u_h(\cdot, t_1)-\dl)_+\|_{L_1(\Om)} \ -    \   \int\limits_{Q_{t_1, t_2}} (b \cdot \nabla u)_h  {\mathcal T}_\dl( u_h ) \, dxdt \ + \\ +  \ \int\limits_{Q_{t_1, t_2}} f_h {\mathcal T}_\dl( u_h)\, dxdt,
  \endgathered
  \label{3}
  \end{equation}
   where we denote $Q_{t_1, t_2}:= \Om \times (t_1, t_2)$.
  Dropping the first non-negative term in the left-hand side of \eqref{3} and taking into account the inequalities
  $$
  {\mathcal T}_\dl(s)\le \dl \qquad \Longrightarrow \qquad  \| {\mathcal T}_\dl(u_h(\cdot, t_1))\|_{L_2(\Om)}^2 \, \le \, \dl^2 \, |\Om|
  $$
  from \eqref{3} we obtain
 \begin{equation}
  \gathered
  \|(u_h(\cdot, t_2)-\dl)_+\|_{L_1(\Om)} \ \le  \ \tfrac 12\,  \dl  \, |\Om| \, +  \, \|(u_h(\cdot, t_1)-\dl)_+\|_{L_1(\Om)} \ - \\ -  \  \frac 1\dl\,  \int\limits_{Q_{t_1, t_2}} (b \cdot \nabla u)_h  {\mathcal T}_\dl( u_h ) \, dxdt \ +  \  \frac 1\dl\,  \int\limits_{Q_{t_1, t_2}} f_h {\mathcal T}_\dl( u_h)\, dxdt. 
  \endgathered
  \label{4}
  \end{equation}
As $u\in C([0,T]; L_1(\Om))$ for any $t\in  [0, T']$ we have the strong convergence
$$
\| u_h(\cdot ,t) - u(\cdot,t )\|_{L_1(\Om)} \ \to \ 0 \qquad \mbox{as} \quad h\to +0
$$
and hence
\begin{equation}
\forall\, t\in [0,T'] \qquad  \|(u_h(\cdot, t)-\dl)_+\|_{L_1(\Om)} \ \to \  \|(u(\cdot, t )-\dl)_+\|_{L_1(\Om)} \quad  \qquad \mbox{as} \quad h\to +0 . 
  \label{5}
\end{equation}
On the other hand, we have
\begin{equation}
\int\limits_{Q_{t_1, t_2}} (b \cdot \nabla u)_h \,  {\mathcal T}_\dl( u_h ) \, dx dt  \ \to \ \int\limits_{Q_{t_1, t_2}}   b \cdot \nabla u  \,  {\mathcal T}_\dl( u  ) \, dx  dt \qquad \mbox{as} \quad h\to +0,
\label{Convective_term}
\end{equation}
\begin{equation}\label{RHS_h_to_zero}
   \int\limits_{Q_{t_1, t_2}} f_h {\mathcal T}_\dl( u_h)\, dxdt \ \to \ \int\limits_{Q_{t_1, t_2}}  f\,  {\mathcal T}_\dl( u)\, dxdt \qquad \mbox{as} \quad h\to +0.
\end{equation}
Indeed, we have
$$
\gathered
 \int\limits_{Q_{t_1, t_2}} (b \cdot \nabla u)_h \,  {\mathcal T}_\dl( u_h ) \, dxdt \ -  \int\limits_{Q_{t_1, t_2}}  b \cdot \nabla u \,  {\mathcal T}_\dl( u) \, dxdt  \ = \\ = \
 \int\limits_{Q_{t_1, t_2}} \Big( (b \cdot \nabla u)_h - b \cdot \nabla u\Big) \,  {\mathcal T}_\dl( u_h  ) \, dxdt \ + \
 \int\limits_{Q_{t_1, t_2}} b \cdot \nabla u  \,  \big( {\mathcal T}_\dl( u_h ) - {\mathcal T}_\dl(u)\big) \, dx dt.
\endgathered
$$
As $b\cdot \nabla u\in L_1(Q_T)$ for any $T'\in (0,T)$ we have $$(b \cdot \nabla u)_h\to b \cdot \nabla u \quad \mbox{in} \quad L_1(Q_{T'}) \quad  \mbox{as} \quad h\to +0. $$
So, for $0\le t_1< t_2\le T'$ we obtain
$$
\gathered
\Big|\, \int\limits_{Q_{t_1, t_2}} \Big( (b \cdot \nabla u)_h - b \cdot \nabla u\Big) \,  {\mathcal T}_\dl( u_h  ) \, dxdt \, \Big| \ \le \ \dl \, \|(b \cdot \nabla u)_h - b \cdot \nabla u\|_{L_1(Q_{T'})} \ \underset{h\to +0}{\to} \ 0.
\endgathered
$$
For the second term we have ${\mathcal T}_\dl(u_h)\to {\mathcal T}_\dl(u)$ in $L_1(Q_{T'})$ and hence for a subsequence we obtain ${\mathcal T}_\dl(u_h)\to {\mathcal T}_\dl(u)$ a.e. in $Q_{T'}$. As
$$
 \Big|\, b \cdot \nabla u  \,  \big( {\mathcal T}_\dl( u_h ) - {\mathcal T}_\dl(u)\big)\, \Big| \ \le \ 2\dl ~|b \cdot \nabla u | \quad \mbox{a.e. in} \quad Q_{T'}, \qquad b \cdot \nabla u \in L_1(Q_{T'})
$$
by Lebesgue's dominated convergence theorem we obtain
$$
\int\limits_{Q_{t_1, t_2}}  b \cdot \nabla u  \,  \big( {\mathcal T}_\dl( u_h ) - {\mathcal T}_\dl(u)\big) \, dx dt \ \to \ 0 \qquad \mbox{as} \quad h\to +0
$$
and hence \eqref{Convective_term} follows. Passing to the limit in \eqref{4} and taking into account \eqref{5}, \eqref{Convective_term}   and \eqref{RHS_h_to_zero}  we arrive at the inequality
\begin{equation}
  \gathered
   \|(u (\cdot, t_2)-\dl)_+\|_{L_1(\Om)} \ \le   \ \tfrac 12\,  \dl  \, |\Om| \, +  \, \|(u (\cdot, t_1)-\dl)_+\|_{L_1(\Om)} \ - \\ -  \  \frac 1\dl\, \int\limits_{Q_{t_1, t_2}}   b \cdot \nabla u~ {\mathcal T}_\dl( u  ) \, dxdt \ + \  \frac 1\dl\, \int\limits_{Q_{t_1, t_2}}   f\,  {\mathcal T}_\dl( u  ) \, dxdt.
  \endgathered
\label{6}
\end{equation}
We have
$$
\int\limits_{Q_{t_1, t_2}}   b \cdot \nabla u~ {\mathcal T}_\dl( u  ) \, dxdt \ = \ \int\limits_{Q_{t_1, t_2}\cap D_\dl}  b \cdot \nabla u~ {\mathcal T}_\dl( u  ) \, dxdt
\ + \ \dl \, \int\limits_{Q_{t_1, t_2}\cap A_\dl}  b \cdot \nabla u  \, dxdt,
$$
where we denote
$$
D_\dl \, := \, \{\, (x,t)\in Q_T: ~0 \le  u(x,t) \le \dl\, \}, \qquad A_\dl \, := \, \{\, (x,t)\in Q_T: ~u(x,t) >  \dl\, \}. 
$$
Taking into account the identities
$$
\tfrac 12\, \nabla |{\mathcal T}_\dl(u)|^2 \, = \, {\mathcal T}_\dl(u)\, \nabla u   \quad \mbox{a.e. in} \quad D_\dl, \qquad
\nabla (u-\dl)_+ \, = \, \nabla u \quad \mbox{a.e. in} \quad A_\dl
$$
we obtain the relation
\begin{equation}
  \int\limits_{Q_{t_1, t_2}}  b \cdot \nabla u~ {\mathcal T}_\dl( u  ) \, dxdt \ = \ \tfrac 12\,   \int\limits_{Q_{t_1, t_2}} b \cdot \nabla |{\mathcal T}_\dl( u  )|^2 \, dxdt
\ + \ \dl \,   \int\limits_{Q_{t_1, t_2}}  b \cdot \nabla (u-\dl)_+  \, dxdt. 
\label{7}
\end{equation}
Applying  the condition \eqref{Non-spectral} we finally obtain
$$
\int\limits_{Q_{t_1, t_2}} b \cdot \nabla u~ {\mathcal T}_\dl( u  ) \, dxdt \ \ge  \ 0. 
$$
Hence we can drop this term in \eqref{6}.
From ${\mathcal T}_\dl(s)\le \dl$ we also conclude
$$
 \frac 1\dl\,  \int\limits_{Q_{t_1, t_2}}  f\,  {\mathcal T}_\dl( u  ) \, dxdt \ \le \  \int\limits_{t_1}^{t_2} \int\limits_{\Om[u(t) > 0 ]}  |f|\, dx dt,
$$
where we denote $\Om[u(t) > 0 ]:=\{ \, x\in \Om:\, u(x,t)>0\,\}$. So,  for any $\dl>0$ we obtain
$$
\|(u (\cdot, t_2)-\dl)_+\|_{L_1(\Om)} \ \le   \ \tfrac 12\,  \dl  \, |\Om| \, +  \, \|(u (\cdot, t_1)-\dl)_+\|_{L_1(\Om)} \ + \  \int\limits_{t_1}^{t_2} \int\limits_{\Om[u(t) > 0]} |f|\, dx dt.
$$
Taking the limit as $\dl \to +0$  and varying $T'$  for any $0\le t_1<t_2< T$ we arrive at
\begin{equation}\label{Decay_L_1_norm_plus}
  \|(u (\cdot, t_2))_+\|_{L_1(\Om)} \ \le   \    \|(u (\cdot, t_1) )_+\|_{L_1(\Om)} \, + \,  \| f\|_{L_1(Q_{t_1, t_2} [ u > 0])},
\end{equation}
where we denote   $Q_{t_1, t_2}[u>0]:= \{ \, (x,t)\in Q_{t_1, t_2}: \, u(x,t)>0\, \}$.
Applying the same technique to the function $-u$ we obtain also
\begin{equation}\label{Decay_L_1_norm_minus}
    \|(u (\cdot, t_2))_-\|_{L_1(\Om)} \ \le   \    \|(u (\cdot, t_1) )_-\|_{L_1(\Om)} \, + \,  \| f\|_{L_1(Q_{t_1, t_2} [u < 0])}. 
\end{equation}
Hence \eqref{Decay_L_1_norm} follows.
\end{proof}

Now we can prove Theorem \ref{Stability_Theorem}.

\begin{proof}
Denote $v^m:= u^m-u$.
Then $v^m$ is a weak solution to the problem
\begin{equation}\label{Equation_v_m}
\left\{ \quad \gathered  \cd_t v^m  -\nu\, \Delta v^m + b^{ m}\cdot \nabla  v^m   \ = \  g^m  \qquad \mbox{in} \quad Q_T, \\
v^m|_{\cd\Om\times (0,T)} \ = \ 0, \qquad \qquad  \qquad \\
v^m|_{t=0} \ = \ v^m_0, \qquad \qquad  \qquad  \endgathered \right.
\end{equation}
where we denote   $$g^m:= f^m-f - (b^m - b)\cdot \nabla u  \ \in \  L_1(Q_T),$$
$$ v^m_0:= u^m_0-u_0 \ \in \ L_2(\Om).$$
 Taking arbitrary $t\in (0,T)$ and applying Theorem \ref{Main_Theorem} with $t_1=0$, $t_2=t$  we obtain the estimate
 $$
 \| v^m (\cdot, t)\|_{L_1(\Om)} \ \le \ \| v^m_0  \|_{L_1(\Om)} \ + \ \, \| g^m\|_{L_1(Q_T)}. 
 $$
For the last term we have the estimate
$$
\gathered
  \| g^m\|_{L_1(Q_T)} \ \le \ \| f^m - f\|_{L_1(Q_T)} + \| b^m - b\|_{L_2(Q_T)} \|\nabla u \|_{L_2(Q_T)}
 \endgathered
$$
which along with the energy estimate gives \eqref{Stability_estimate}.
 \end{proof}

 \newpage
 \section{Proof of Theorem  \ref{Maximum_Princip_Theorem}}
 \setcounter{equation}{0}

\medskip
In this section we  prove  Theorem \ref{Maximum_Princip_Theorem}. Note that for  $f \equiv 0$  Theorem \ref{Maximum_Princip_Theorem} follows already from Theorem \ref{Main_Theorem}  as  in this case we have for $k_0:= \| u_0\|_{L_\infty(\Om)}$ and $t_1=0$
$$(u(\cdot, 0) -k_0)_{+}  = 0, \qquad  (u(\cdot, 0) + k_0)_{-}  = 0, $$
from
\eqref{Decay_L_1_norm_plus} and \eqref{Decay_L_1_norm_minus} we obtain
$$(u(\cdot, t) -k_0)_{+}  = 0, \qquad  (u(\cdot, t) + k_0)_{-}  = 0 \qquad \mbox{for any} \quad t>0. $$

\begin{proof}  Denote by  $b^m\in C^\infty(\bar Q_T)$,  $u_0^m \in C^{\infty}(\bar \Om)$  and $f^m\in C^\infty(\bar Q_T)$ the smooth approximations of $b$, $u_0$ and $f$ constructed in Section \ref{Existence} in the proof of Theorem \ref{Existence_Theorem}. Note that we can construct approximations in such a way that the following inequalities hold:
\begin{equation}\label{Data_approximation}
\| u_0^m \|_{L_\infty(\Om)} \, \le \, \| u_0  \|_{L_\infty(\Om)}, \qquad  \|f^m\|_{L_s(Q_T)} \, \le  \, \|f\|_{L_s(Q_T)}.
\end{equation}
Denote by $u^m$ the solutions to the problem  \eqref{Equation_m}. Denote $k_0= \| u_0\|_{L_\infty(\Om)}$. For  $k>k_0$ we can take $\eta:=(u^m-k)_+$ as a test function in the integral identity \eqref{Integral_Identity} for the solution $u^m$ to the problem \eqref{Equation_m}.
For the convective term using \eqref{Non-spectral_m} we obtain  the inequality
$$
\int\limits_{\Om_m} b^m\cdot \nabla u^m \, \eta \, dx  =  \int\limits_{\Om_m} b^m\cdot \nabla \tfrac 12 \, |(u^m-k)_+|^2\, dx \ + \ k\, \int\limits_{\Om_m} b^m\cdot \nabla (u^m-k)_+\, dx \ \ge \ 0
$$
and hence this term can be dropped. Integrating in time and taking into account   $\eta (x,0)=0$   we obtain
$$
 \tfrac 12 \esssup\limits_{t\in (0,T)} \int\limits_{\Om} |(u^m-k)_+|^2\, dx \ + \ \nu \int\limits_{Q_T}|\nabla (u^m-k)_+|^2\, dxdt   \ \le \ \int\limits_{Q_T}   |f^m|\, (u^{m}-k)_+\, dxdt. 
$$
Applying the H{\" o}lder inequality we get
$$
 \int\limits_{Q_T}   |f^m|\, (u^{m}-k)_+\, dxdt \ \le \ \| f^m\|_{L_{q}(A_k)} \| (u^m-k)_+\|_{L_{2+ \frac 4n}(Q_T)},
$$
where
$$
q:= \frac{2(n+2)}{n+4}, \qquad
  A_k \, := \, \, \{\, (x,t)\in Q_T: \, u^m(x,t)>k\, \} .
$$
Using the imbedding \eqref{Parabolic_imbedding} for the energy class
 and the Young inequality  we arrive at
$$
 \esssup\limits_{t\in (0,T)} \int\limits_{\Om} |(u^m-k)_+|^2\, dx \ + \ \int\limits_{Q_T}|\nabla (u^m-k)_+|^2\, dxdt  \ \le \ c\,  \| f^m\|_{L_q(A_k)}^2
$$
with some constant $c>0$ depending on $n$, $\Om$, $T$ and $\nu$. As $s>\frac {n+2}2>q$ by the H{\" o}lder inequality we obtain
$$
\| f^m\|_{L_q(A_k)}^2 \ \le \ \| f^m\|_{L_{\frac {n+2}2}(A_k)}^2 |A_k|^{1 - \frac 2{n+2}} \ \le  \ \| f^m \|_{L_{s}(Q_T)}^2 \, |A_k|^{1 - \frac 2{n+2}+2\ga},
$$
where $\ga = \frac 2{n+2}- \frac 1s$. Taking into account \eqref{Data_approximation} for any $k\ge k_0$ we arrive at
$$
  \esssup\limits_{0<t<T} \int\limits_{\Om} |(u^m-k)_+|^2\, dx \ + \ \int\limits_{Q_T}|\nabla (u^m-k)_+|^2\, dxdt \ \le \ c\, \| f \|_{L_{s}(Q_T)}^2 \,  |A_k|^{1-\frac 2{n+2}+2\ga}
$$
with some constant $c>0$ depending only on $n$, $\Om$, $T$, $\nu$ and $s$.
By Proposition \ref{Level_sets_Theorem} we obtain
$$
\| (u^m-k_0)_-\|_{L_\infty(Q_T)} \ \le \ c\, \|f\|_{L_s(Q_T)}  .
$$
Similarly we can get
$$
\| (u^m-k_0)_-\|_{L_\infty(Q_T)} \ \le \ c\, \|f\|_{L_s(Q_T)}.
$$
These estimates together give
$$
\| u^m \|_{L_\infty(Q_T)} \ \le \ \| u_0\|_{L_\infty(\Om)} \ + \  c\, \|f\|_{L_s(Q_T)}.
$$
Hence there exists $v\in L_\infty(Q_T)$ such that for some subsequence we have
$$
u^m \overset{*}{\rightharpoonup} v \qquad \mbox{in} \quad L_\infty(Q_T), \qquad \| v\|_{L_\infty(Q_T)} \, \le \, \| u_0\|_{L_\infty(\Om)} \ + \  c\, \|f\|_{L_s(Q_T)}.
$$
It is clear also that $v$ must be a weak solution to the problem \eqref{Equation}. From uniqueness of weak solutions we conclude $v\equiv u$ a.e. in $Q_T$.
\end{proof}

\newpage
\section{Counterexamples}\label{Section_Counterexamples}
\setcounter{equation}{0}

\bigskip
In this section we discuss  the proof of  Proposition \ref{Non-uniqueness}. In the case of the whole space $\Om=\Bbb R^n$ the non-trivial weak solution $u\not\equiv 0$  (in the sense of Definition \ref{Def_weak_solution}) to the Cauchy problem
\begin{equation}\label{Equation_Cauchy_problem}
\left\{ \quad \gathered  \cd_t u  -   \Delta u + b\cdot \nabla u   \ = \ 0  \qquad \mbox{in} \qquad \Bbb R^n \times (0,T), \\
u|_{t=0} \ = \ 0, \qquad \qquad  \qquad  \endgathered \right.
\end{equation}
can be obtained easily if we multiply the fundamental solution of the heat equation $$\Ga (x,t)=(4\pi t)^{-\frac n2} e^{-\frac{|x|^2}{4t}}, \qquad x\in \Bbb R^n, \quad t>0, $$ by some positive  power of $t$. Indeed, assume
$$
u(x,t) \ = \ t^{-\al} \,  e^{-\frac{|x|^2}{4t}}
$$
and
$$
b(x,t) \ = \ \left(n-2\al\right)\, \frac{x}{|x|^2}.
$$
It is clear that
$$
b\in L_\infty(0,T; L_{n,w}(\Bbb R^n))
$$
and  $u$ and $b$ satisfy the parabolic equation \eqref{Equation_Cauchy_problem} everywhere in $ \Bbb R^n \times (0,T)$. Moreover,
\begin{equation}\label{Growing_norms}
\| u(\cdot, t)\|_{L_2(\Bbb R^n)}^2 \ = \ c_n \, t^{\frac n2 -2\al}, \qquad \| \nabla u(\cdot ,t)\|_{L_2(\Bbb R^n)}^2 \ = \   c_n \, t^{\frac n2-1 -2\al}
\end{equation}
and hence for any
\begin{equation}\label{Restriction_on_alpha}
\al  \, < \, \tfrac n4
\end{equation}
we obtain
$$
u \in L_\infty(0,T; L_2(\Bbb R^n)), \qquad \nabla u \in L_2(\Bbb R^n \times (0,T))
$$
and
$$
\| u(\cdot , t ) \|_{L_2(\Bbb R^{ n })} \ \to \ 0 \qquad \mbox{as} \qquad t\to +0.
$$
So, for any $\al$ satisfying \eqref{Restriction_on_alpha} $u$ is a weak solution to the problem \eqref{Equation_Cauchy_problem}. In particular, one can take $\al=0$. To construct a non-trivial weak solution to the problem \eqref{Equation} in the case of a bounded domain $\Om$  we can localize the construction above. So, we present the proof of Proposition \ref{Non-uniqueness}.

\begin{proof} Assume $\Om=\{\, x\in \Bbb R^n:\, |x|<1\, \}$. Take
$$
  u(x,t) \ = \ \zeta(x,t)~e^{-\frac{|x|^2}{4t}},
  \qquad \zeta(x,t)  \ = \ \zeta_0(|x|,t), \qquad \zeta_0(r,t) \ = \ e^{\frac {r-1}t } -1
$$
and
 $$
  b(x,t)  \ = \ b_0(|x|, t)\, \frac {x}{|x|^2}, \qquad
b_0(r,t)  \ = \ n - 1   \  -  \ \frac{\zeta_0}{\frac 2r \left(\zeta_0+ 1\right)- \zeta_0 }.
$$
Note that
$$
-1\le \zeta_0(r, t) \le 0, \qquad \forall\, r\in [0,1], \qquad \forall\, t>0,
$$
and hence
$$
-1 \ \le \ \frac{\zeta_0}{\frac 2r \left(\zeta_0+ 1\right)- \zeta_0 } \ \le \ 0,   \qquad \forall \, r\in [0,1], \quad \forall\, t>0.
$$
This implies that
$$
b\in L_\infty(0,T; L_{n,w}(\Om)).
$$
On the other hand, the direct computations show that
 $$
 \cd_ t u -\Delta u +b\cdot \nabla  u  \ = \  \left[  \left( \cd_t \zeta_0  -    \cd_r^2\zeta_0  + \frac{|x|}{t}\, \cd_r\zeta_0\right)  +   (b_0-n + 1)  \left(    \frac{\cd_r\zeta_0}{|x|}     - \frac{\zeta_0}{2t} \right)  \  +  \  \frac{\zeta_0} {2t} \right]~e^{-\frac{|x|^2}{4t}}.
$$
As $\zeta_0$ and $b_0$ satisfy
$$
\cd_t \zeta_0  - \cd_r^2\zeta_0 + \frac rt\, \cd_r\zeta_0  \ = \ 0, \qquad b_0 - n + 1 \ = \  - \frac{ \frac{\zeta_0} {2t}}{   \frac{\cd_r\zeta_0}{r}     - \frac{\zeta_0}{2t}}
$$
we conclude that
$u$ satisfies \eqref{Equation} everywhere in $Q_T$.  Finally, it is obvious that $u|_{\cd\Om\times (0,T)} =0$ and from $|\zeta(x,t)|\le 1$ we get
$$
\| u(\cdot, t)\|_{L_2(\Om)}^2 \ \le \ \int\limits_{\Bbb R^n } e^{-\frac{|x|^2}{2t}} \, dx \ = \ c_n t^{\frac n2} \ \to \ 0 \qquad \mbox{as} \qquad t\to+0.
$$
Moreover,
$$
\nabla u(x,t) =  \frac 1t \,  e^{-\frac{(|x|-2)^2}{4t}} \frac{x}{|x|} -    \frac{x}{2t} \Big(e^{\frac{|x| - 1}{t}} -1\Big) e^{-\frac{|x|^2}{4t}}.
$$
As $|x|\le 1$ we obtain $(|x|-2)^2\ge 1 $ and hence
$$
    \|\nabla u (\cdot, t) \|^2_{L_2(\Omega)} \le c_n e^{-\frac{1}{ 2 t}} t^{-2} + c_n t^{\tfrac n2 - 1}, \qquad \forall\, t>0.
$$
So, we derive
$$
    \nabla u  \in L_2(Q_T).
$$
 Hence $u$ is a weak solution to the problem \eqref{Equation} corresponding to $u_0\equiv 0$ and $f\equiv 0$.

\end{proof}

Now we comment on the proof of  Proposition \ref{Non-stability}.

\begin{proof}
Take  any $a \in (1,2)$ and consider $ \dl  \to +0$. Then for    $t_{ \dl } = { \dl }^{ -a }$  we obtain
$$
   \|b^{ \dl } -b \|_{L_2( Q_{t_{ \dl }} )} \, \le \,  c_n   { \dl }^{ 1 - a/2 } \to 0, \qquad  \|u^{ \dl }(\cdot, t_{ \dl }) - u_0  \|_{L_1(\Om)} \, \ge \,  c_n(e^{{ \dl }^{1 - a}} - 1)  \to +\infty
$$
with some positive  constant $c_n$ depending only on $n$. This relations give  \eqref{Norm_growth}
for sufficiently small $ \dl >0$ (depending on $\ep$).
\end{proof}

\newpage

\end{document}